\newtheorem{theorem}{Theorem}[section] 
\newtheorem{lemma}[theorem]{Lemma}
\newtheorem{proposition}[theorem]{Proposition}
\newtheorem{corollary}[theorem]{Corollary}
\newtheorem{observation}[theorem]{Observation}
\newtheorem{axiom}[theorem]{Axiom}
\theoremstyle{definition}
\newtheorem{definition}[theorem]{Definition}
\theoremstyle{remark}
\newtheorem{remark}[theorem]{Remark}
\newcommand{\N}{\mathbb{N}} 
\newcommand{\set}{\mathbf{Set}} 
\newcommand{\gpd}{\mathbf{Gpd}} 
\newcommand{\sset}{\mathbf{sSet}} 
\newcommand{\bisim}{\mathbf{ssSet}} 
\newcommand{\catC}{\mathcal{C}} 
\newcommand{\catM}{\mathcal{M}} 
\newcommand{\catE}{\mathcal{E}} 
 \newcommand{\refl}{\textup{\textsf{refl}}} 
 \newcommand{\transport}{\textup{\textsf{transport}}} 
\newcommand{\universetyp}{\mathcal{U}} 
\newcommand{\pointtyp}{\textup{\textbf{1}}} 
\newcommand{\issmall}{\textup{\textsf{issmall}}} 
\newcommand{\isequiv}{\textsf{\textup{IsEquiv}}} 
\newcommand{\tope}{\textup{\textsf{tope}}}
\newcommand{\shape}{\textup{\textsf{shape}}}
\newcommand{\type}{\textup{\textsf{type}}}
\newcommand{\context}{\textup{\textsf{context}}}
\newcommand{\simplex}{\Delta^0} 
\newcommand{\simplext}{\Delta^1} 
\newcommand{\simplextt}{\Delta^2} 
\newcommand{\horn}{\Lambda_1^2} 
\newcommand{\bsimplext}{\partial\simplext} 
\newcommand{\bsimplextt}{\partial\simplextt} 
\newcommand{\homtyp}{\textup{\textsf{hom}}} 
\newcommand{\idx}{\textup{\textsf{id}}} 
\newcommand{\trans}{\textup{\textsf{trans}}} 
\newcommand{\dua}{\textup{\textsf{dua}}} 
\newcommand{\arrtofun}{\textup{\textsf{arrtofun}}} 
\newcommand{\idtoiso}{\textup{\textsf{idtoiso}}} 
\newcommand{\rezk}{\textup{\textsf{rezk}}} 
\newcommand{\yon}{\textup{\textsf{yon}}} 
\newcommand{\evid}{\textup{\textsf{evid}}} 
\newcommand{\cocone}{\textup{\textsf{cocone}}} 
\newcommand{\cone}{\textup{\textsf{cone}}} 
\newcommand{\ie}{\textit{i.e. }}
\title{Limits and colimits of synthetic $\infty$-categories}
\date{}
\author{C\'esar Bardomiano Mart\'inez \\
	Department of Mathematics and Statistics\\
	University of Ottawa \\
	\textit{cbard035@uottawa.ca}}
\begin{document}
	
	\maketitle
	
	\begin{abstract}
	We develop the theory of limits and colimits in $\infty$-categories within the synthetic framework of simplicial Homotopy Type Theory developed by Riehl and Shulman. We also show that in this setting, the limit of a family of spaces can be computed as a dependent product.	
	\end{abstract}
	

\section{Introduction}

Defining the notion of $\infty$-category is always difficult a task. There have been many different proposed definitions, notably to mention a few, by Boardman and Vogt \cite{boardmanvogt} with the idea of quasi-categories, later exploited by Joyal \cite{joyal2008notes} and Lurie \cite{lurie}, or by Rezk in \cite{rezk} in the form of \textit{complete Segal spaces}. In the case of $(\infty,1)$-categories, most of known definitions have been shown to be equivalent in an appropriate homotopy theoretic sense, see for example the work by To\"en \cite{Ton2005VersUA}, Bergner \cite{bergner2007three} and Joyal-Tierney \cite{joyaltierney}.

Recent work by Riehl and Shulman \cite{riehlshulman} develops a \textit{synthetic theory of $(\infty,1)$-categories}. They achieve this by implementing an extension of Homotopy Type Theory. Moreover, Riehl explores in \cite{riehlundergrads} the novelty and advantages that support the philosophy behind this approach. We highlight some of this points here. Firstly, all construction are by design invariant under equivalence. Another aspect of this synthetic theory is that by construction compatible with the Univalence Axiom. Moreover, the standard model of the theory, the category of bisimplicial sets, has a well-known homotopy theory. Furthermore, it is possible to carry this synthetic theory internally to any Grothendieck $\infty$-topos (see \Cref{inftytoposinterpretation}). Finally, another pleasant consequence of this synthetic account is that somewhat simplifies some definitions and constructions, and therefore lightens the development of the theory of $(\infty,1)$-categories. To begin with, the synthetic definition of $(\infty,1)$-category is fairly succinct in comparison with any other given before.

Previously, Voevodsky's simplicial model for HoTT \cite{lumsdainekapulkin} interpreted types as $\infty$-groupoids. However, we do not have a similar interpretation of types as $\infty$-categories, where by $\infty$-category we mean $(\infty,1)$-category. To overcome such difficulty \cite{riehlshulman} introduce a new type theory by adding to HoTT a strict interval object and the novel idea, due to Shulman and Lumsdaine (unpublished), of a new type former called \textit{extension type}. We call this \textit{simplicial Homotopy Type Theory} or sHoTT for short. This type theory can be seen as an instance of a cubical type theory, however, different to that presented in \cite{Cohen2015CubicalTT}.

In this setting, it is possible to define \textit{Rezk types} as certain special types which play the role of $ \infty $-categories. In this synthetic treatment they retrieve some expected results from higher categories, including a theory for adjunctions, the analog of left (right) fibrations, which are called \textit{(contravariant) covariant families}, and the Yoneda lemma for these fibrations. Additional work by Buchholtz and Weinberger \cite{buchwein} studies \textit{synthetic cocartesian fibrations} as a generalization of covariant families. Further treatment can be found in Weinberger's PhD thesis \cite{weinthesis}, this includes \textit{two-sided fibrations}.

Exploiting results from \cite{shulmanreedy} it is shown in \cite{riehlshulman} that sHoTT has a model in bisimplicial sets where Rezk types correspond to complete Segal spaces (also called Rezk spaces).

However, we are not in the position to claim that sHoTT is indeed able to capture all category theory of $ (\infty,1) $-categories. For example, the construction of the opposite of an $ (\infty,1) $-category is not implemented yet, and we lack of a Yoneda embedding in which an $ (\infty,1) $-category is embedded into an $ (\infty,1) $-category of presheaves of spaces. Another important problem is that the type of \textit{discrete types} is a \textit{Segal type} that does not coincide with the $ \infty $-category of $ \infty $-groupoids in the bisimplicial sets model.

Despite the current inherent limitations of the type theory, we have tried to explore which other categorical properties can be obtained with the theory as is. Throughout this paper we intend to present a reasonable theory of limits and colimits of diagrams of $(\infty,1)$-categories. In this setting, we can prove most of the expected properties that are familiar from category theory. Under the mild assumption of the existence of a type-theoretic universe of spaces, we can compute the limit of a space-valued diagram as a dependent product (see \Cref{directedunivalence:remark}).

\subsection{Limits and colimits}

Limits and colimits have been studied extensively for quasi-categories in \cite{lurie}, and for Segal spaces. A short presentation appears in \cite{yonedarasekh}. We introduce the definitions of limits and colimits within this synthetic theory and verify that they are consistent with the current definition for Rezk spaces in the bisimplicial model of sHoTT. We also prove some expected results like \Cref{univpropcolim}, which can be phrased as the universal property of colimits. We also present the interaction between limits and colimits with \textit{adjoints}, \Cref{rapl} shows that ``right adjoints preserve limits.''

We prove in \Cref{uniquelimits} that in a Rezk type the \textit{type of all limits} of a given diagram is a proposition. This can be understood as a uniqueness property. Lastly, in \Cref{sectioncomputation} we show that in an appropriate sense any limit of spaces can be computed simply as a dependent product. The goal is to replicate the fact that for any diagram of spaces $ G: I \to \infty\text{-}\gpd$ where $ I $ is a set, $ \lim_I G = \prod_{i\in I}G_i $. The difficulty carrying this computation in sHoTT is that we are unable to construct the correct type of spaces (discrete types) within the theory. We implement this using a directed formulation of the Univalence Axiom due to Cavallo, Riehl, and Sattler which allows to assume the existence of a type with the desired properties.

The reader will appreciate that the study of limits and colimits in the synthetic setting is relatively simple. Our only prerequisite is some knowledge of Homotopy Type Theory. This is in line with the goal of having a synthetic theory of $\infty$-categories where these are somewhat basic objects out of which we can effortlessly develop a robust synthetic theory.

\subsection{Outline}

In \Cref{sectionstt} we start by giving an introduction to the work
of Riehl-Shulman. The material we present here is not exhaustive, but for the
understanding and development of this work it will be enough. All results
contained in this section are due to Riehl-Shulman and can be found in
\cite{riehlshulman}. The reader interested in the details of simplicial Homotopy Type Theory
is invited to read the just mentioned reference, the experienced reader may skip this whole section.

Having the basic theory at hand we define in \Cref{sectiondefinitions} limits
and colimits. We characterize them in \Cref{univpropcolim}. This result can be understood as their universal property. We also prove the analogous result from category theory that right adjoints preserve limits and left adjoints
preserve colimits.

In \Cref{sectionrezk} we study limits in Rezk types. This special case yields \Cref{uniquelimits} which is the uniqueness of limits up to equality. Up to this point all the results and proofs can be dualized.
Finally, in \Cref{sectioncomputation} we carry out the computation of the limit of a
diagram of ``spaces'' as a dependent product. Here we use \textit{univalent covariant families}, due to Cavallo, Riehl, and Sattler (unpublished) to make sense of the $\infty$-category of spaces, as simply taking the type of all Rezk types does not give the correct object.

Finally, in \Cref{semantics} we verify that our definitions are consistent with the semantics. The general procedure we follow is to first interpret our type-theoretic definition in the intended semantics of bisimplicial sets, and then prove that the resulting statement is equivalent to the existing definition. For limits and colimits we do this in \Cref{semanticscomparisonlc}. 

\paragraph{Computer verified proofs.}

While preparing this paper, a formalization project for sHoTT \cite{rzk} came to light. The overall goal is to, within the present framework, obtain computer verified proofs of results about $ \infty $-categories. Using the new proof assistant \texttt{Rzk}, the project aims to formalize the content of \cite{riehlshulman}, \cite{buchwein}, and the content of the present work.\\

\begin{par}
	\textbf{Acknowledgement.} This work is part of the author's ongoing PhD thesis under the direction of Simon Henry. The author wants to thank his supervisor for his insights, comments and suggestions which greatly improved early versions of this paper.\\
	The author also acknowledges the support of the Natural Sciences and Engineering Research Council of Canada (NSERC), under the grant, reference number RGPIN-2020-06779, awarded to Simon Henry. The author is grateful for the support granted by the Department of Mathematics and Statistics of the University of Ottawa.
\end{par}


\section{Extension types, Segal types and covariant families}\label{sectionstt}

In this section we introduce the necessary material about the type theory that we need in the paper. We refrain from providing detailed proofs. The best reference for this is the original paper \cite{riehlshulman} where simplicial Homotopy Type Theory was first introduced. We indicate precisely where each result can be found in \textit{Ibidem}.

\subsection{Extension types} \label{extensiontypes}

Essentially, simplicial Homotopy Type Theory is ordinary Homotopy Type Theory augmented with some axioms postulating the existence of a strict interval object $\mathbbm{2}$. That is, a type $\mathbbm{2}$ equipped with a total order, a smallest element $0$ and a distinct largest element $1$. In a general type $X$, an arrow in $X$ can be defined as a map $f: \mathbbm{2} \to X$, with the source and target of $f$ being the image of the endpoints $0, 1 : \mathbbm{2}$. However, one would like to be able to talk about the family of hom types from $x$ to $y$, for $x,y : X$. This would not be achievable in ordinary type theory, it can be done thanks to a new type former called an \textit{extension type}. However, the inner workings of extension types force us to single out the interval $\mathbbm{2}$ by putting it in a separate layer of a layered type theory.

More concretely, simplicial type theory is built as a three layer type theory: the layer of \textbf{cubes}, the layer of \textbf{topes}, and the layer of \textbf{types}. All rules for each layer can be found in greater detail in our main reference \cite{riehlshulman}, here we just give a small account together with some of the fundamental results we will need.

The first layer is a type theory with finite products of types. Some rules of this type theory include the following:
\[
\infer{\pointtyp \textup{ cube}}{
}
\quad\quad
\infer{\Gamma \vdash\star:\pointtyp}{
}
\]

\[
\infer{I\times J \textup{ cube}.}{
	I \, \textup{cube} & J \, \textup{cube}
}
\]

The second layer is an intuitionistic logic over the layer of cubes. Its types are called \textbf{topes}. Topes can be regarded as polytopes embedded in a cube. They admit operations of finite conjunction and disjunction, but negation, implication or quantifiers are not part of this theory. The complete rules for topes can be found in \cite[Figure 2]{riehlshulman}. There is a ``tope equality'' that appears in the third layer as a ``strict equality.'' For example, this tope equality is used to define some \textit{shapes} below, and it is denoted with the symbol ``$\equiv$.''

In simplicial type theory, the \textbf{cubes} layer is given as follows: It starts by postulating the cube $\mathbbm{2}$, which has two elements $ 0:\mathbbm{2} $ and $ 1:\mathbbm{2} $. This cube also comes with a inequality tope
\[
\infer{(x\leq y)\,\tope}{
	x:\mathbbm{2} & y:\mathbbm{2}.
}
\]
In addition, there are axioms that turn the inequality tope into a total order relation on $\mathbbm{2}$ with distinct endpoints $0$ and $1$. Recall that these axioms are:

\[
\infer{\vdash x\leq x}{
	x:\mathbbm{2}
}\quad\quad
\infer{(x\leq y),(y\leq z) \vdash (x\leq z)}{
	x:\mathbbm{2},y:\mathbbm{2},z:\mathbbm{2} 
}\quad\quad
\infer{(x\leq y),(y\leq x) \vdash (x\equiv y)}{
	x:\mathbbm{2},y:\mathbbm{2}
}
\]
\[
\infer{\vdash (x\leq y)\vee (y\leq x)}{
	x:\mathbbm{2},y:\mathbbm{2}
}\quad\quad
\infer{\vdash (0\leq x)}{
	x:\mathbbm{2}
}\quad\quad
\infer{\vdash (x\leq 1)}{
	x:\mathbbm{2}
}\quad\quad
\infer{\vdash \bot}{
	(0\equiv 1)
}.
\]

The rest of the cubes are generated by finite products of the cube $\mathbbm{2}$. Using cubes and topes we can introduce \textbf{shapes} as follows:

\[
\infer{\{t:I\: | \:\phi\}\quad \shape.}{
	I\quad \textup{\textsf{ cube}} & t:I\vdash \phi\quad \tope
}
\]

Some relevant shapes are:\\

$ \simplex:\equiv \{t:\pointtyp|\top \},$

$ \simplext:\equiv \{t:\mathbbm{2}|\top \},$

$ \simplextt:\equiv \{\langle t_1,t_2\rangle:\mathbbm{2}\times\mathbbm{2}|t_2\leq t_1 \},$

$ \bsimplext:\equiv \{t:\mathbbm{2}|(t\equiv 0)\vee (t\equiv 1)\},$

$ \bsimplextt:\equiv \{\langle t_1,t_2\rangle:\simplextt| (0\equiv t_2\leq t_1)\vee (t_1\equiv t_2)\vee ( t_2\leq t_1 \equiv 1)\},$

$ \horn:\equiv \{\langle t_1,t_2\rangle:\simplextt| (t_1\equiv 1)\vee (t_2\equiv 0)\}.$\\

More can be said about this strict interval: The category of simplicial sets is the classifying topos for such strict intervals \cite{moerdijkmaclane}. Furthermore, as explained in \cite{riehlshulman}, by embedding simplicial sets into the category of bisimplicial sets it is possible to show that the later presents the ``classifying ($ \infty$,1)-topos'' of strict intervals.

Finally, there is a third layer of types that has all the ordinary type formers of Homotopy Type Theory and one additional type former, the \textit{extension type}, that involves the previous layers. Its formation rule is:
\[
\infer{ \Xi \:| \: \Phi \: | \: \Gamma\vdash \left\langle \prod_{t:I \:| \:\psi}A(t)\middle|_{a}^{\phi} \right\rangle \,\type. }{
	\deduce{\Xi \:| \:\Phi\vdash \Gamma\, \context}{\{t:I \:| \:\phi\}\,\shape} & \deduce{\Xi, t:I \:| \:\Phi,\psi \:| \:\Gamma \vdash A\,\type}{\{t:I \:| \:\psi\}\,\shape}
	& \deduce{\Xi, t:I \:| \:\Phi,\phi \:| \:\Gamma \vdash a:A}{t:I \:| \:\phi\vdash \psi}
}
\]
We refer to \cite{riehlshulman} for the precise formulation of its rules. The name extension types is suggestive on how to construe them. We can think of $\{t:I \:| \:\phi\}$ as a ``sub-shape'' of $\{t:I\: | \:\psi\}$ and read the judgment $\Xi, t:I \,\, | \,\, \Phi,\phi\: | \:\Gamma \vdash a:A$
as a function $\phi\rightarrow A$. We could represent a point in an extension type with a dashed arrow in the commutative diagram:

\begin{center}
	\begin{tikzcd}[sep=small]
		\phi \ar[r] \ar[d,hookrightarrow] & A \\
		\psi. \ar[ru,dashrightarrow] &
	\end{tikzcd}
\end{center}

This dashed arrow does not have to be unique in any sense. The benefit of this diagrammatic representation of extension types will be obvious later on when we introduce \textit{Segal types}. Note that the $\homtyp$ type in \Cref{homtypedefinition} is a special case of an extension type. As noted in \cite{buchwein}, it is conceivable to obtain a similar theory by considering HoTT plus a strict interval theory, and replace extension types using identity types (see \cite[Pag. 119]{riehlshulman} and \cite[Section 2.4]{buchwein}), but this makes the theory harder to use. In this paper we will follow the original Riehl-Shulman formulation of sHoTT using extension types.

\begin{remark} \label{cubical:remark}
  There is a connection of sHoTT with cubical type theory \cite{Cohen2015CubicalTT}. We refer to \cite[Remark 3.2]{riehlshulman} where this is explained further. We just mention that the cubical path-type $\textsf{\textup{Path}}_A(x,y)$ can be expressed as the extension type $\langle \prod_{t:\mathbb{I}}A |_{\textup{rec}_\vee(x,y)}^{t \equiv 0 \vee t\equiv 1} \rangle$. This is analogous to the $\homtyp$ types we introduce below in \Cref{homtypedefinition}.
\end{remark}

Extension types behave very much like dependent product types. In particular, one can think of extension types as $\prod $-types with a judgmental equality added to it. Furthermore, many results which hold for $\prod$-types are also valid for extension types. For example:

\begin{theorem} \label{theorem43}
	If $t:I\: | \:\phi\vdash\psi$, $X:\{t:I\: | \:\psi\}\rightarrow \mathcal{U}$ and $Y:\prod\limits_{t:I\: | \:\psi}(X\rightarrow U)$, while $a:\prod_{t:I\: | \:\psi}X(t)$
	and $b:\prod_{t:I\: | \:\phi}Y(t,x(t))$, then
	\begin{center}
		$\left\langle \prod_{t:I\: | \:\psi}(\sum_{x:X(t)}Y(t,x))\middle|_{\lambda t.(a(t),b(t))}^{\phi}\right\rangle \simeq \left(\sum_{f:\left\langle\prod_{t:I\: | \:\psi}X(t)\middle|_a^{\phi}\right\rangle}\left\langle\prod_{t:I\: | \:\psi}Y(t,f(t))\middle|_f^{\psi}\right\rangle\right)$.
	\end{center}
\end{theorem}
\begin{proof}
	This is \cite[Theorem 4.3]{riehlshulman}.
\end{proof}

This is similar to \cite[Theorem 2.15.7]{hottbook}. The functions needed for the proof come from introduction and computation rules for extension types, these rules are quite similar to those of dependent products. We will need this idea of the proof later in \Cref{isoequivdepsum}.
Also, we have:

\begin{theorem}(\cite[Theorem 4.4]{riehlshulman})\label{theorem44}
	Suppose that we have $t:I\: | \:\phi\vdash\psi$, $t:I\: | \:\psi\vdash\chi$,
	$X:\{t:I\: | \:\chi\}\to \universetyp$ and $a:\prod\limits_{t:I\: | \:\phi}X(t)$.
	Then
	
	\[\left\langle\prod\limits_{t:I\: | \:\chi}X\middle|_{a}^{\phi}\right\rangle \simeq \sum_{f:\left\langle\prod_{t:I\: | \:\psi}X\middle|_a^{\phi}\right\rangle}\left\langle\prod_{t:I\: | \:\chi}X\middle|_f^{\psi}\right\rangle.\]
	
      \end{theorem}

      \subsubsection{Relative function extentionality axiom} \label{extensionality}

      In Homotopy Type Theory we use the function extensionality axiom for dependent functions. We recall this formulation.
      \begin{axiom}
      Let $B:A \to \universetyp$ be a dependent type family over a type $A$. For $f,g: \prod_{a:A}B(a)$, the canonical map $f=g \to \prod_{a:A}(fa=ga)$ is an equivalence.  
      \end{axiom}
       Note that the equality on the left is in the dependent product. An element of $\prod_{a:A}(fa=ga)$ is usually refer to as a homotopy, and often this type is written as $f \sim g$. There is a version of function extensionality for extension types.
      \begin{axiom} \label{extensionextensionality}
        Suppose $t:I \; | \; \phi \vdash \psi $ and that $A:\{t:I \; | \; \psi \} \to \universetyp$ is such that each $A(t)$ is contractible, and moreover there is an element $a: \prod_{t:I \; | \; \phi}A(t) $, then $\left\langle \prod_{t:I \; | \; \psi}A(t)|_a^\phi \right\rangle $ is contractible.
      \end{axiom}

      Further discussion and different formulations on this axiom can be found in \cite[4.4]{riehlshulman}. Moreover, taking $\phi$ to be $\bot$ in \Cref{extensionextensionality} gives us function extensionality. In the paper, we assume \Cref{extensionextensionality}, hence we also have function extensionality, and we use the notation $f\sim g$ from above.

\subsection{Segal types}

With extension types and simplices at hand it is feasible to introduce ``morphisms'' of a type as ``points'' in a ``hom space.'' The coherences that
witness ``composition'' between these morphisms are elements in another suitable type. We can use the above to define \textit{Segal types}.

Let us recall first that from the rules presented in \cite[Figure 3]{riehlshulman} it is possible to prove the following: Given a type $A$, in order to construct an element $a:A$ in context $\bsimplext$ we can just give two elements $x,y:A$, and vice versa, any two elements $x,y:A$ determine another element $a:A$ in context $\bsimplext$. Similarly, the construction of an element $a:A$ in context $\bsimplextt$ is equivalent to give three terms $a_0,a_1,a_2:A$ in context $t:\mathbbm{2}$ such that $a_0[0/t]\equiv a_1[0/t],$ $a_0[1/t]\equiv a_2[0/t]$ and $a_1[1/t]\equiv a_2[1/t]$. For more details on this we refer to \cite[Section 3.2]{riehlshulman}. The last construction can be seen as the ``boundary of a triangle in $A$'', see \Cref{typcomposition} below.

\begin{definition} \label{homtypedefinition}
	Given $x,y:A$, determining a term $[x,y]:A$ in context $\partial\simplext$, define
	\begin{center}
		$\homtyp_A(x,y):\equiv \left\langle \simplext\rightarrow A\middle|_{[x,y]}^{\partial\simplext}\right\rangle$.
	\end{center}
	A term in this type is called an \textbf{arrow} in $A$.
\end{definition}

Note that this is just an extension type where the type family has constant value $A$. An element $f:\homtyp_A(x,y)$ has the property that $f(0)\equiv x$ and $f(1)\equiv y$. Thus, such an element captures the idea of what an arrow in $A$ from $x$ to $y$ should be.

\begin{definition}\label{typcomposition}
	For $x,y,z:A$ and $f:\homtyp_A(x,y),g:\homtyp_A(y,z)$ and $h:\homtyp_A(x,z)$ there is a term $[x,y,z,f,g,h]:A$ in context $\partial\simplextt$, define
	\begin{center}
		$\homtyp_A^2\left(\begin{tikzcd}[sep=tiny]
			& y \ar[dr,dash,"g"] & \\
			x \ar[ur,dash,"f"] \ar[rr,dash,"h"'] & & z \\
			& \text{} & 
		\end{tikzcd}\right):\equiv \left\langle\simplextt\rightarrow A\middle|_{[x,y,z,f,g,h]}^{\partial\simplextt} \right\rangle$.
	\end{center} 
\end{definition}

For an interpretation of the types above we use the bisimplicial sets model of sHoTT. We give more details in \Cref{semanticscomparisonlc}.

\begin{definition} \label{segaldefi}
	A \textbf{Segal type} is a type $A$ such that for all $x,y,z:A$ and $f:\homtyp_A(x,y),g:\homtyp_A(y,z)$ the type
	\begin{center}
		$\sum\limits_{h:\homtyp_A(x,z)}\homtyp_A^2\left( \begin{tikzcd}[sep=tiny]
			& y \ar[dr,dash,"g"] & \\
			x \ar[ur,dash,"f"] \ar[rr,dash,"h"'] & & z \\
			 & \text{} &
		\end{tikzcd}\right)$
	\end{center}
	is contractible.
\end{definition}

Informally, $A$ is a Segal type if any pair of composable arrows have an essentially unique composite.

The first component of the center of contraction of this type is the \textbf{composition} of $f$ and $g$, we adopt the usual notation $g\circ f$ for such term. This composition of arrows is associative whenever it is defined. For any type $ A $ we can define $\idx_x:\homtyp_A(x,x)$ as $\idx_x(t)\equiv x$ for all $t:\simplext$. Furthermore, under the assumption that $A$ is a Segal type this element  is the \textbf{identity} for the composition (see \cite[Section 5.2 and 5.3]{riehlshulman}). We can also get a useful characterization of Segal types:

\begin{theorem}(\cite[Theorem 5.5]{riehlshulman}) \label{rmkcorollary56}
	A type $A$ is a Segal type if and only if the restriction map
	\[
	(\simplextt \to A) \to (\horn \to A)
	\]
	is an equivalence.
\end{theorem}

Moreover, this allows to show that if $A:X\rightarrow \universetyp$ is a family over a type or shape $X$ such that for all $x:X$ the type $A(x)$ is  a Segal type then the dependent product of the family $A$ over $X$ is again a Segal type \cite[Corollary 5.6]{riehlshulman}. Plainly this means that the dependent product of Segal types is again a Segal type.

Let $A,B$ be types and $\phi:A\rightarrow B$ a function. For any $x,y:A$ there is an induced function
\begin{center}
	$\phi_{\#}:\hom_A(x,y)\rightarrow \hom_B(\phi(x),\phi(y))$
\end{center}
defined by precomposition. We use $\phi$ for this induced function instead of $\phi_{\#}$ when there is no risk of confusion. We have:

\begin{proposition}\label{proposition61}
	Any function $\phi : A \rightarrow B$ between Segal types preserves identities and composition. This is, for all $a,b,c,d:A$ and $f:\homtyp_A(a,b),\, g:\homtyp_A(b,c)$ we have $\phi(g\circ f)=\phi(g)\circ \phi(f)$ and $\phi(\idx_d)=\idx_{\phi(d)}$.
\end{proposition}
\begin{proof}
	See \cite[Proposition 6.1]{riehlshulman}.
\end{proof}

This proposition makes patent the functorial behaviour of functions between Segal types. From \Cref{rmkcorollary56} the type
of functions $A\rightarrow B$ is a Segal type when $B$ is itself a Segal type and $A$ is a shape or type. Given two functions $f,g:A\rightarrow B$ we can select
a term $\alpha:\homtyp_{A\rightarrow B}(f,g)$, spelling out the definition of such term we get for all $s:\mathbbm{2}$ a function
$\alpha(s):A\rightarrow B$ such that $\alpha(0)\equiv f$ and $\alpha(1)\equiv g$. Additionally, for all $a:A$ we have 
\[ \lambda (s:\mathbbm{2}).\alpha(s)(a):\homtyp_B(fa,ga).
\]
We denote this function as $\alpha_a$ and we refer to it as the component of $\alpha$ at $a:A$. An element $\alpha:\homtyp_{A\rightarrow B}(f,g)$ is called
\textbf{natural transformation} between $f$ and $g$. Such terminology is supported by the following remarks:

\begin{remark}\label{proposition65}
	Let $A$ be a type or shape, $B$ a Segal type and $f,g:A\rightarrow B$. Then the induced function
	\[ \homtyp_{A\rightarrow B}(f,g)\rightarrow \prod\limits_{a:A}\homtyp_B(fa,ga)\]
	is an equivalence \cite[Proposition 6.3]{riehlshulman}. And also \cite[Proposition 6.5]{riehlshulman}, if $\alpha:\homtyp_{A\rightarrow B}(f,g),\beta:\homtyp_{A\rightarrow B}(g,h)$
	and $x:A$ then
	\[(\beta\circ\alpha)_x=\beta_x\circ\alpha_x\text{ and } (\idx_f)_x=\idx_{f(x)}.\]
	
\end{remark}

This is to say that natural transformations are component-wise defined and their composition is performed point-wise.

\begin{remark}\label{proposition66}
	Under the same assumptions of the previous remark, if $\alpha:\homtyp_{A\rightarrow B}(f,g)$ and $k:\homtyp_{A}(x,y)$ then $\alpha_y\circ fk=gk\circ\alpha_x$.
\end{remark}

Hence we can say that a natural transformation is truly natural in the categorical sense.

\subsubsection{Discrete types}

\begin{definition}{\cite[Definition 7.1]{riehlshulman}} \label{discretetypes}
	For any type $ A $, there is map
	\[ \mathsf{idtoarr}:\prod_{x,y:A} (x=_{A}y) \to \homtyp_{A}(x,y) \]
	given by path induction $ \mathsf{idtoarr}_{x,x}(\refl_x):\equiv \idx_x $. We say that the type $ A $ is discrete if $ \mathsf{idtoarr}_{x,y} $ an equivalence for all $ x,y:A $.
\end{definition}

The first aspect to note about discrete types is that they are also Segal types \cite[Proposition 7.3]{riehlshulman}.

\subsubsection{Rezk types}

In the category of bisimplicial sets, Segal spaces are, intuitively, bisimplicial sets containing categorical and homotopical information. This data is not necessarily compatible. Complete Segal spaces fix this problem. More precisely, adopting the terminology and notation of \cite{rezk}, we have $X_{\text{hoequiv}}$ as the \textit{space of homotopy equivalences} of $X$. This space $X_{\text{hoequiv}}$ is contained in $X_1$ and consist of \textit{homotopy equivalences}. Then there is an obvious map $u:X_0 \to X_{\text{hoequiv}}$, if $ u $ is an equivalence of spaces we say that the Segal space is \textbf{complete}. The map $u$ is not always a weak equivalence, an example of such Segal space can be found in \cite[Example 2.57]{rasekh2}. A similar phenomenon occurs for Segal types. These have an associative, unital composition but are not ``univalent'' just as Segal spaces are not necessarily complete. To amend this problem and obtain ``better behaved'' types we need to introduce \textit{Rezk types}.

Let $A$ be a Segal type and $f:\homtyp_A(x,y)$, we define the type
\[\textsf{\textup{isiso}}(f):\equiv \left( \sum\limits_{g:\homtyp_A(y,x)}g\circ f=\idx_x \right) \times \left( \sum\limits_{h:\homtyp_A(y,x)}f\circ h=\idx_y \right)\]
and say that $f$ is an \textbf{isomorphism} if this type is inhabited. The reader might note that we do not ask for the left and right inverse of an isomorphism to be equal. The reason behind this can be found in \cite[Section 10.1]{riehlshulman} There is an immediate characterization of isomorphisms:

\begin{proposition}\label{proposition101}
	Let $A$ be a Segal type and $f : \homtyp_A (x, y)$. Then $f$ is an isomorphism if and only if we have $g : \homtyp_A (y, x)$ with $g \circ f = \textup{\textsf{id}}_x$ and 
	$f \circ g = \textup{\textsf{id}}_y$.
\end{proposition}
\begin{proof}
	This is \cite[Proposition 10.1]{riehlshulman}.
\end{proof}

The type $\textsf{\textup{isiso}}(f)$ is a proposition \cite[Proposition 10.2]{riehlshulman}, hence we can define the type of isomorphisms between any
two terms $x,y$ in a Segal type $A$ as
\[x\cong y:\equiv \sum\limits_{f:\homtyp_A(x,y)}\textsf{\textup{isiso}}(f).\]

Given a Segal type $A$ and $x:A$ is clear that $\idx_x$ is an isomorphism. We get a family of functions
\[\idtoiso_A:\prod\limits_{x,y:A} \left( x=y\rightarrow x\cong y \right)\]
constructed by path induction via  $\idtoiso_A(\refl_x):\equiv \idx_x$. Although the function $\idtoiso_A$ is dependent of the type $A$, we will drop $A$ from the notation since it is always clear over which type the function is defined.

\begin{definition}
	A Segal type $A$ is a \textbf{Rezk type} if $\idtoiso$ is an equivalence. We denote the inverse by $ \rezk $.
\end{definition}
We will need the following fact about Rezk types from \cite[Proposition 10.9]{riehlshulman}

\begin{proposition} \label{exponentialrezk}
	If $ B $ is a Rezk type then for any type or shape $ X $, the type $ B^X $ is a Rezk type.
\end{proposition}

\subsection{Covariant families}

Left fibrations were introduced by Joyal in \cite{joyal2008volii}, and further studied in \cite{lurie}. These are the quasi-categorical version of functors cofibered in groupoids. In turn, the category of bisimplicial sets also have a well-known theory of left fibrations (see \cite{yonedarasekh}). In sHoTT we study them as
\textit{covariant families}, and they will be an important tool in later sections.

Let  $C:A\rightarrow \universetyp$ be a type family over a type $A$. Two elements $x,y:A$, $f:\homtyp_a(x,y)$ and $u:C(x),v:C(y)$ define the type
\[\homtyp_{C(f)}(u,v):\equiv \left\langle \prod\limits_{t:\mathbbm{2}}C(f(t))\middle|_{[u,v]}^{\partial\simplext}\right\rangle.\]
This type can be thought of as the type of arrows from $u$ to $v$ over $f$ in the total type of $C$.

\begin{definition} \label{covardefi}
	A type family $C:A\rightarrow \universetyp$ over a Segal type $A$ is \textbf{covariant} if for any $f:\homtyp_A(x,y)$ and
	$u: C(x)$ the type
	\[
	\sum\limits_{v:C(y)}\homtyp_{C(f)}(u,v)
	\]
	is contractible. Similarly, if for any $f:\homtyp_A(x,y)$ and $v: C(y)$ the type
	\[
	\sum\limits_{u:C(x)}\homtyp_{C(f)}(u,v)
	\]
	is contractible we say that $C$ is \textbf{contravariant}.
\end{definition}

In the situation of the definition above, we denote the center of contraction of each type as $(f_*u,\trans_{f,u})$ and $(f^*v,\trans_{f,v})$, respectively.

From \cite[Remark 8.3]{riehlshulman} we have that if $g: B \rightarrow A$ is a function and $C : A \rightarrow\mathcal{U}$ is a covariant type family, then
$\lambda b. C(g(b)) : B \rightarrow \mathcal{U}$ is also covariant. This just means that a covariant family is stable under substitution.

The first important example of a covariant family is given by the $\homtyp$ type (see \cite[Proposition 8.13]{riehlshulman}):

\begin{proposition}\label{proposition813}
	Let $A$ be a type and fix $a : A$. Then the type family $\lambda x. \homtyp_A (a, x) : A \rightarrow \mathcal{U}$ is covariant if and only if $A$ is a Segal type.
      \end{proposition}
We can show that if $C:A\rightarrow \universetyp $ is a covariant family over a Segal type $A$ then the total type of $C$
\[
\sum\limits_{x:A}C(x)
\]
is a Segal type \cite[Theorem 8.8]{riehlshulman}. Also, as previously advertised, if the covariant family is defined over a Segal type then it is functorial in the following sense:
\begin{proposition}\label{proposition816}
	Suppose $A$ is a Segal type and $C : A \rightarrow \mathcal{U}$ is covariant. Then given $f : \homtyp_A (x, y)$, $g : \homtyp_A (y, z)$, and $u : C(x)$, we have
	$g_* (f_* u) = (g\circ f )_* u$ and $(\textup{\textsf{id}}_x )_*u = u$.
\end{proposition}
\begin{proof}
	This is \cite[Proposition 8.16]{riehlshulman}.
\end{proof}
Furthermore, naturality holds:
\begin{proposition}\label{proposition817}
	Let $C, D : A \rightarrow\universetyp $ be two covariant families and a fiber-wise map $\phi:\prod\limits_{x:A}C(x)\rightarrow D(x)$.
	Then for any $f:\homtyp_A(x,y)$ and $u : C(x)$,
	\[
	\phi_y (f_* u) = f_* (\phi_x(u)).
	\]
\end{proposition}
\begin{proof}
	See \cite[Proposition 8.17]{riehlshulman}.
\end{proof}

If we have a covariant family $C:A\rightarrow \universetyp$ over a Segal type, a path $p:x=y$ and $u:C(x)$ then the element
$(\idtoiso(p))_*u:C(y)$ can be understood as transporting $u$ along the arrow $\idtoiso(p)$. The covariant transport and the usual type-theoretic transport along paths coincide, by this we mean that they are propositional equal:
\begin{lemma}\label{lemma107}
	If $A$ is Segal and $C : A\rightarrow \mathcal{U}$ is covariant, while $e : x =_A y$, then for any $u : C(x)$ we have
	\begin{center}
		$\textup{\textsf{idtoiso}}(e)_* u = \textup{\textsf{transport}}^C (e, u)$.
	\end{center}
\end{lemma}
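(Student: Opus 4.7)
The plan is to proceed by path induction on $e : x =_A y$. Since identity types are freely generated by $\refl$, it suffices to exhibit an element of the dependent type
\[ \prod_{x:A} \prod_{u:C(x)} \bigl( \idtoiso(\refl_x)_* u = \transport^C(\refl_x, u) \bigr), \]
because once we have the claim for the reflexivity case it automatically extends to arbitrary $e$.

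In the reflexivity case, $\transport^C(\refl_x, u)$ computes to $u$ by the usual computation rule for transport in homotopy type theory. On the other side, $\idtoiso$ was defined precisely by path induction so that $\idtoiso(\refl_x) \equiv \idtyp_x$ (the identity arrow at $x$ in the Segal type $A$). Thus what remains to show is $(\idtyp_x)_* u = u$.

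This last identity is exactly the second clause of Proposition \ref{proposition816}, which states that $(\idtyp_x)_* u = u$ for any covariant family $C$ over a Segal type and any $u : C(x)$. Combining this with the computation rule for transport yields the equality $\idtoiso(\refl_x)_* u = u = \transport^C(\refl_x, u)$, which closes the base case of the induction and hence proves the lemma. There is no real obstacle here — the entire content of the statement is that covariant transport along $\idtoiso(e)$ and type-theoretic transport along $e$ agree, and path induction reduces this to the already-established fact that covariant transport along the identity arrow is trivial.
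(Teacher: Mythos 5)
Your proof is correct and is essentially the standard argument: the paper itself simply cites Lemma 10.7 of Riehl--Shulman, whose proof is exactly this path induction reducing to the computation rule for transport, the definitional clause $\idtoiso(\refl_x):\equiv\idtyp_x$, and the identity $(\idtyp_x)_*u=u$ from Proposition \ref{proposition816}. Nothing is missing.
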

\begin{proof}
  \cite[Lemma 10.7]{riehlshulman}.
\end{proof}

\subsubsection{Yoneda lemma}

The Yoneda lemma is with no doubt a fundamental result of category theory. In the context of $\infty$-categories it has been proved for different models, such as, for quasi-categories in \cite{lurie}, \cite{joyal2008notes}, for Segal spaces in \cite{yonedarasekh} and $\infty$-cosmoi in \cite{riehlverity}. We can think of the result in simplicial type theory, as commented in \cite{riehlshulman}, as a directed version of path transport in type theory. Also, there is a dependent version of Yoneda lemma which can be seen as the analogous to path induction for covariant families.
Although we will need only one of its consequences, namely \textit{representability}, there is no harm if we quickly review it in full generality.

Let $C:A\rightarrow \universetyp$ a covariant family and a fix term $a:A$. Then there are functions

\[\evid_a^C:\equiv \lambda\phi.\phi(a,\idx_a):\left(\prod\limits_{x:A}\homtyp_A(a,x)\rightarrow C(x)\right)\rightarrow C(a),\]
\[\yon_a^C:\equiv \lambda u.\lambda x.\lambda f.f_*u: C(a)\rightarrow \left(\prod\limits_{x:A}\homtyp_A(a,x)\rightarrow C(x)\right).\]

\begin{theorem}(Yoneda lemma, \cite[Theorem 9.1]{riehlshulman})
	If $A$ is a Segal type, then for any covariant family $C:A\rightarrow \universetyp$ and $a:A$ the functions $\yon_a^C$ and $\evid_a^C$ are mutual inverses.
\end{theorem}

We have the dependent version of Yoneda:

\begin{theorem}(\cite[Theorem 9.5]{riehlshulman})
	If $A$ is a Segal type, $a:A$ and $C:\prod\limits_{x:A}(\homtyp_a(a,x)\rightarrow\universetyp)$ is covariant then the function
	\[\evid_a^C:\equiv \lambda\phi.\phi(a,\idx_a):\left(\prod\limits_{x:A}\prod_{f:\homtyp_A(a,x)}C(x,f)\right)\rightarrow C(a,\idx_a)\]
	is an equivalence.
\end{theorem}

One particular instance of this equivalence is when $C:\equiv  \homtyp_A(a',-)$, where $A$ is a Segal type and $a':A$, gives the so-called \textbf{Yoneda embedding}. The concept of representability is closely related to this embedding.

\begin{definition}
	A covariant family $C:A\rightarrow\mathcal{U}$ over a Segal type is \textbf{representable} is there exist $a:A$ and a family of equivalences:
	\begin{center}
		$\prod\limits_{x:A}(\homtyp_A(a,x)\simeq C(x))$.
	\end{center}
	
\end{definition}

To finish this section we state the result that for us is of special interest:

\begin{definition}\label{definitioninitiality}
	A term $b:B$ is \textbf{initial} if for any $x:B$ the type $\homtyp_B(b,x)$ is contractible. That is, if the type
	\begin{center}
		$\textup{\textsf{isinitial}}(b):\equiv \prod\limits_{x:B}\textup{\textsf{iscontr}}(\homtyp_B(b,x))$
	\end{center}
	is inhabited.
	Also, a term $b:B$ is \textbf{terminal} if for any $x:B$ the
	type $\homtyp_B(x,b)$ is contractible. That is, if the type
	\begin{center}
		$\textup{\textsf{isterminal}}(b):\equiv \prod\limits_{x:B}\textup{\textsf{iscontr}}(\homtyp_B(x,b))$
	\end{center}
	is inhabited.
      \end{definition}

      In a Segal type initial and terminal elements are unique up to isomorphism.
      
\begin{proposition} \label{initial-terminal:unique}
  Let $A$ be a Segal type. The following is true:
  \begin{enumerate}
  \item If $a:A$ is an initial element, then it is unique up to isomorphism.
  \item If $a:A$ is a terminal element, then it is unique up to isomorphism.
  \end{enumerate}
\end{proposition}
\begin{proof}
Assume that $A$ is a Segal type and let $a,b:A$ be initial elements. The types $\homtyp_A(a,b),\homtyp_A(b,a)$ are contractible, in particular, inhabited say by $f,g$, respectively. We get the terms $f\circ g :\homtyp_A(b,b),g\circ f :\homtyp_A(a,a)$ and $\textup{\textsf{id}}_b:\homtyp_A(b,b), \textup{\textsf{id}}_a:\homtyp_A(a,a)$. Again, these hom types are contractible so $f\circ g=\textup{\textsf{id}}_b$ and $g\circ f=\textup{\textsf{id}}_a$. By \Cref{proposition101} $f$
is an isomorphism between $a$ and $b$. Therefore, initial terms in Segal types are unique up to isomorphism. Similarly, a terminal element is unique up to isomorphism.  
\end{proof}

\begin{proposition}\label{proposition910}
	A covariant type family $C : A\rightarrow \mathcal{U}$ over a Segal type $A$ is representable if
	and only if there exists an initial element $(a, u)$ in the type $\sum\limits_{x:A}C(x)$, in which case
	\begin{center}
		$\textup{\textsf{yon}}_a^C(u):\prod\limits_{x:A}(\homtyp_A(a,x)\rightarrow C(x))$ 
	\end{center}
	defines an equivalence.
\end{proposition} 
\begin{proof}
	This is \cite[Proposition 9.10]{riehlshulman}.
\end{proof}

The majority of the results we have cited are stated in terms of covariant families. However, for contravariant families we have similar results. The proofs for these statements are completely analogous.


\section{Limits and colimits}\label{sectiondefinitions}

There is a well-known path to define limits and colimits in $\infty$-categories, as in \cite{lurie} for quasi-categories or in Segal spaces \cite{yonedarasekh}. This strategy
is first to establish an adequate notion of cones and cocones. The second step is to define initial and terminal objects. Then a \textit{limit} is a ``terminal cone'' and a \textit{colimit} is an ``initial cocone.'' We follow the same approach to define them in sHoTT using the machinery of covariant families.\\

For $A,B$ types and $b:B$ define the constant function
\[
\triangle b:\equiv  \lambda a.b:A\rightarrow B.
\]

\begin{remark}\label{homfuntypcov}
	If $B$ is a Segal type then by \Cref{rmkcorollary56}, $B^A:\equiv A\rightarrow B$ is also a Segal type. Let $f:A\rightarrow B$, from \Cref{proposition813} and the fact that covariance and contravariance are stable under substitution (precomposition) then the type families
	\begin{center}
		\begin{tikzcd}[sep=small]
			\lambda b.\homtyp_{B^A}(f,\triangle b): B \ar[r] & \mathcal{U}
		\end{tikzcd}
	\text{ and }
		\begin{tikzcd}[sep=small]
			\lambda b.\homtyp_{B^A}(\triangle b,f): B \ar[r] & \mathcal{U}
		\end{tikzcd}
	\end{center}
	are covariant and contravariant, respectively.
\end{remark}

\begin{definition} \label{conesdefinition}
	Let $f:A\rightarrow B$ be a function. Define the type of \textbf{cocones} of $f$ as the type
	\[\cocone(f):\equiv \sum\limits_{b:B}\homtyp_{B^A}(f,\triangle b),\] and the type of \textbf{cones} of $f$ as the type
		\[\cone(f):\equiv \sum\limits_{b:B}\homtyp_{B^A}(\triangle b,f).\]
      \end{definition}
      
      \begin{observation}
        From \Cref{proposition65} it follows that
\[
\cocone(f)\simeq \sum\limits_{b:B}\prod\limits_{a:A}\homtyp_B(fa,b)
\]
and
\[
\cone(f)\simeq \sum\limits_{b:B}\prod\limits_{a:A}\homtyp_B(b,fa).
\]
      \end{observation}

We can recognize the type of cones and cocones of a function as direct analogues to cones and cocones in category theory. In \Cref{semanticscomparisonlc} we will verify how the definition above, and the following, are consistent with the semantics.

\begin{definition} \label{colimitdef}
	A \textbf{colimit} for $f:A\rightarrow B$ is an initial term of the type:
	\[
		\cocone(f)\equiv \sum\limits_{x:B}\homtyp_{B^A}(f,\triangle x).
	\]
	A \textbf{limit} for $f:A\rightarrow B$ is an terminal term of the type:
	\[
		\cone(f)\equiv \sum\limits_{x:B}\homtyp_{B^A}(\triangle x,f)
	\]
\end{definition}

\begin{remark} \label{cones:segal}
	Since the total space of a covariant family over a Segal type is a Segal type then from \Cref{homfuntypcov} it follows that the type of cocones of a function
	$f:A \to B$ is a Segal type. Similarly, we also get that the type of cones is a Segal type. We will make use of these two facts where needed without explicit mention.
\end{remark}

 We can show the following:

\begin{corollary}\label{uniqueuptoiso}
	Let $B$ be a Segal type and $f:A\rightarrow B$ is a function. Limits and colimits are unique up to a unique isomorphism if they exist.
\end{corollary}
\begin{proof}
	It follows immediately from \Cref{cones:segal} and \Cref{initial-terminal:unique} above.
\end{proof}

We will obtain in \Cref{uniquelimits} uniqueness property up to equality under the additional assumption that $ B $ is a Rezk type. The following characterization is more flexible in terms of computations. We can think of it as the universal property of limits and colimits.

\begin{proposition} \label{univpropcolim}
	Let $B$ a Segal type and a function $f:A\rightarrow B$. There exists a colimit $(b,\beta)$ for $f$ if and only if
	\begin{center}
		$\prod\limits_{x:B}(\homtyp_B(b,x)\simeq \homtyp_{B^A}(f,\triangle x))$.
	\end{center}
	Similarly, there exists a limit $(a,\alpha)$ for $f$ if and only if
	\begin{center}
		$\prod\limits_{x:B}(\homtyp_B(x,a)\simeq \homtyp_{B^A}(\triangle x,f))$.
	\end{center}
\end{proposition}
\begin{proof}
	The existence of a colimit $(b,\beta)$ for the function $f$ \ie $(b,\beta)$ is an initial term of the type $\sum\limits_{x:B}\homtyp_{B^A}(f,\triangle x)$,
	by \Cref{proposition910} is equivalent to say that the covariant family 
	\begin{center} 
		$\homtyp_{B^A}(f,\triangle -):B\rightarrow \mathcal{U}$                               
	\end{center}
	is representable. The proof for the second part is similar.
\end{proof}

In the situation of \Cref{univpropcolim}, it is common practice to say that ``$b$'' is the colimit and that ``$a$'' is the limit of $f:A\to B$, respectively. This leaves implicit the datum of the cocone and the cone, respectively.

\subsection{Preservation of limits and colimits}

The main goal of this section is to reproduce the classical result that limits are preserved under right adjoints. The same argument will show that left adjoints preserve colimits.
Different presentation of adjunction are studied in \cite{riehlshulman}, namely \textbf{transposing} and \textbf{diagrammatic} adjunctions, and
it is shown that they are equivalent. The first type of adjunction is the analogous definition of adjunction in category theory
in terms of hom sets, whereas the second resembles the triangle identities. For us, it will be enough to know that a \textbf{quasi-transposing adjunction} between types
$A,B$ consists of functions $f:A\rightarrow B$ and $u:B\rightarrow A$ and a family of maps

\[
\phi:\prod\limits_{a:A,b:B}\homtyp_B(fa,b)\rightarrow \homtyp_A(a,ub)
\]
with quasi-inverses
\[
\psi:\prod\limits_{a:A,b:B}\homtyp_A(a,ub)\rightarrow \homtyp_B(fa,b)
\]
and homotopies 
\[\xi:\prod_{a,b,k}\phi_{a,b}(\psi_{a,b}(k))=k,\quad\zeta:\prod_{a,b,l}\psi_{a,b}(\phi_{a,b}(l))=l.\]

In this situation $u$, together with the data above is a \textbf{quasi-transposing right adjoint} for $f$. Using this notation we have:

\begin{theorem}\label{rapl}
	Let $A,B$ be Segal types and functions $g:J\rightarrow B$, $f:A\rightarrow B$, $u:B\rightarrow A$. If $g$ has a limit $(b,\beta)$ and $u$ is a quasi-transposing right adjoint of $f$, then $(u(b),u\beta)$ is a limit for $ug:J\rightarrow A$.
\end{theorem}
\begin{proof}
	Denote by
	\[
	D:\equiv \homtyp_{A^J}(\triangle -,ug):A\rightarrow \mathcal{U} \text{ and } C:\equiv \homtyp_{B^J}(\triangle -,g):B\rightarrow \mathcal{U}.
	\]
	Consider the term $(u(b),u\beta):\cone(ug)$. For any $(y,\alpha):\cone(ug)$ we have:
	\[
	\homtyp_{D(k)}(\alpha,u\beta)\equiv\left\langle \prod\limits_{s:\mathbbm{2}}D(k(s))\middle|_{[\alpha,u\beta]}^{\partial\simplext} \right\rangle
	\]
	and
	\[
	\prod\limits_{s:\mathbbm{2}}D(k(s))\equiv \prod\limits_{s:\mathbbm{2}}\homtyp_{A^J}(\triangle k(s),ug)\simeq \prod\limits_{s:\mathbbm{2}}\prod\limits_{j:J}\homtyp_A(k(s),ug(j)).
	\]
	For any $h:\homtyp_B(f(y),b)$ and $k:\homtyp_A(y,ub)$ we have a map:
	\[
	\homtyp_{D(k)}(\alpha,u\beta)\rightarrow \homtyp_{C(h)}(f\alpha,\beta)
	\]
	defined by $\lambda l.\lambda s.\lambda j.\psi_{k(s),g(j)}(l(s)(j))$. And also
	\[
	\homtyp_{C(h)}(f\alpha,\beta)\rightarrow \homtyp_{D(k)}(\alpha,u\beta)
	\]
	is defined by $\lambda m.\lambda s.\lambda j.\phi_{m(s),g(i)}(m(s)(j))$. These maps compose to the identities because of the coherence given by
	$\xi$ and $\zeta$, thus
	\[
	\homtyp_{D(k)}(\alpha,u\beta)\simeq \homtyp_{C(h)}(f\alpha,\beta).
	\]	
	Hence:
	\begin{align*}
		\homtyp_{\cone(ug)}((y,\alpha),(u(b),u\beta)) & \simeq \sum\limits_{k: \homtyp_A(y,ub)}\homtyp_{D(k)}(\alpha,u\beta) \\
		& \simeq \sum\limits_{h: \homtyp_B(fy,b)}\homtyp_{C(h)}(f\alpha,\beta) \\
		& \simeq \homtyp_{\cone(g)}((fy,f\alpha),(b,\beta))  
	\end{align*}
	The third equivalence follows from the adjunction and the equivalence above. This last type is contractible since $(b,\beta)$ is a limit for $g$.
	In conclusion, $(u(b),u\beta)$ is a limit for $ug$.
\end{proof}

We also have:

\begin{corollary}
	If $A,B$ are Segal types and $f:A\rightarrow B$ has left quasi-transposing adjunct then $f$ preserves colimits of functions $g:J\rightarrow A$.
\end{corollary}
\begin{proof}
	Similar to the previous theorem.
\end{proof}

\section{(Co)Limits in Rezk types}\label{sectionrezk}

In this section, we restrict our attention to colimits in a Rezk type, which allow to improve some of our results.

In what follows, for $C:A\rightarrow \mathcal{U}$ a covariant (contravariant) family we denote by \[\tilde{C} :\equiv \sum\limits_{x:A}C(x)\]
to the dependent sum over $A$.

\begin{lemma}\label{isoequivdepsum}
	Assume that $C:A\rightarrow \mathcal{U}$ is a covariant family over a Segal type $A$, let $(a,u),(b,v):\sum\limits_{x:A}C(x)$. Then
	\[
	((a,u)\cong (b,v))\simeq \sum\limits_{f:a\cong b}f_*u=v.
	\]
\end{lemma}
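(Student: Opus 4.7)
The plan is to characterize the hom type of the total space $\tilde C$ by unfolding the extension type defining it, collapsing one layer using covariance, and then transferring the isomorphism predicate from $\tilde{C}$ to $A$.

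First, I would apply Theorem \ref{theorem43} to the defining extension type of $\hom_{\tilde C}((a,u),(b,v))$. Taking $X(t)\equiv A$ and $Y(t,x)\equiv C(x)$ over the shape $\Delta^1$ with boundary data $[(a,u),(b,v)]$, the theorem yields
\[
\hom_{\tilde C}((a,u),(b,v))\;\simeq\;\sum_{f:\hom_A(a,b)}\hom_{C(f)}(u,v),
\]
matching the expected picture of an arrow in a total space as an arrow downstairs together with a compatible arrow in the fiber.

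Next, I would use covariance to replace the fiber $\hom_{C(f)}(u,v)$ by the identity type $(f_*u=v)$. By Definition \ref{covardefi}, the family $w\mapsto \hom_{C(f)}(u,w):C(b)\to \typuniv$ has contractible total space with centre $(f_*u,\trans_{f,u})$. A standard fact (a dependent family with contractible total sum is pointwise equivalent to the identity type based at its centre, via path induction in one direction and contraction in the other) then gives $\hom_{C(f)}(u,v)\simeq (f_*u=v)$. Summing over $f$, I obtain the key equivalence
\[
\hom_{\tilde C}((a,u),(b,v))\;\simeq\;\sum_{f:\hom_A(a,b)}(f_*u=v). \qquad (*)
\]

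Finally, I must show that $(*)$ restricts to an equivalence between isomorphisms, i.e.\ that under $(*)$ an arrow $\alpha\leftrightarrow (f,p)$ is an isomorphism in $\tilde C$ iff $f$ is an isomorphism in $A$. Because \textsf{isiso} is a proposition, it suffices to check logical equivalence. The forward direction is immediate from Proposition \ref{proposition61} applied to the first projection $\pi_1:\tilde C\to A$, which is a function between Segal types and hence preserves composition and identities. For the backward direction, given an inverse $g:\hom_A(b,a)$ of $f$ with $g\circ f=\idtyp_a$ and $f\circ g=\idtyp_b$, I would produce $q:g_*v=u$ from the chain $g_*v=g_*(f_*u)=(g\circ f)_*u=(\idtyp_a)_*u=u$, using $\textup{ap}_{g_*}(p^{-1})$ for the first equality and Proposition \ref{proposition816} for the others, and thereby a candidate inverse arrow $(g,q)$ via $(*)$. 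The dual case of a contravariant $C$ is handled by a symmetric argument with $\trans_{f,v}$ in place of $\trans_{f,u}$.

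The main obstacle will be the backward direction of the last step: verifying that the constructed $(g,q)$ is genuinely a two-sided inverse of $(f,p)$ in $\tilde C$. Using $(*)$ applied to $\hom_{\tilde C}((a,u),(a,u))$ and $\hom_{\tilde C}((b,v),(b,v))$, both composites and the identity arrows reduce to pairs whose first components agree by assumption, and the second components live in path types that are determined up to equality by the covariant-transport coherence of Proposition \ref{proposition816}; combined with the fact that \textsf{isiso} is a proposition, this suffices for the backward implication.
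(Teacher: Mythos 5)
Your overall route is the paper's own: unfold $\homtyp_{\tilde{C}}((a,u),(b,v))$ via Theorem \ref{theorem43} into $\sum_{f:\homtyp_A(a,b)}\homtyp_{C(f)}(u,v)$, use Proposition \ref{proposition61} applied to $\pi_1$ for the forward direction, and build the candidate inverse from $g$ together with the chain $g_*v=g_*(f_*u)=(g\circ f)_*u=(\idtyp_a)_*u=u$. Your extra move of replacing $\homtyp_{C(f)}(u,v)$ by the identity type $(f_*u=v)$ using contractibility of $\sum_{w}\homtyp_{C(f)}(u,w)$, and then reducing the whole lemma to a fiberwise logical equivalence between the propositions $\textsf{isiso}(\bar{\alpha})$ and $\textsf{isiso}(f)$, is a genuine organizational improvement: the paper instead constructs $\phi$ and $\psi$ by hand and never verifies that they are mutually inverse, whereas your reduction makes that verification unnecessary.

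The one step that does not go through as written is the check that $\bar{g}\circ\bar{\alpha}=\idtyp_{(a,u)}$. You claim the second components ``live in path types that are determined up to equality by the covariant-transport coherence of Proposition \ref{proposition816}'', but those second components sit in $(h_*u=u)$, a path type in the fiber $C(a)$, which is discrete but not in general a proposition or even a set; so agreement of second components is not automatic. What is actually needed is compatibility of your equivalence $(*)$ with composition: the composite in $\tilde{C}$ of the arrows corresponding to $(f,p)$ and $(g,q)$ must be shown to correspond to $g\circ f$ paired with the path assembled from $p$, $q$ and the coherence $g_*(f_*u)=(g\circ f)_*u$. Proposition \ref{proposition816} gives the point-level identities but not this compatibility. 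The paper supplies exactly this missing piece by pairing a $2$-simplex in $A$ with boundary $(f,g,\idtyp_a)$ with a dependent $2$-simplex in $C$ with boundary $(\trans_{f,u},\trans_{g,v},\idtyp_u)$, obtaining a filler in the Segal type $\tilde{C}$ and invoking uniqueness of composites there to conclude $\bar{g}\circ\bar{f}=\idtyp_{(a,u)}$. You need that argument, or an equivalent appeal to uniqueness of dependent composites for covariant families, to close the backward direction; once it is in place the rest of your proof is sound.
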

\begin{proof}
	First, we have the equivalence
	\begin{align}\label{equivtot}
		\homtyp_{\tilde{C}}((a,u),(b,v))\simeq \sum\limits_{f:\homtyp_A(a,b)}\homtyp_{C(f)}(u,v).
	\end{align}
	Recall that this follows from \Cref{theorem43}
	
	\[
	\left\langle \prod_{t:I\: | \:\psi}(\sum_{x:X(t)}Y(t,x))\middle|_{\lambda t.(a(t),b(t))}^{\phi}\right\rangle \simeq \sum_{f:\left\langle\prod_{t:I\: | \:\psi}X(t)\middle|_a^{\phi}\right\rangle}\left\langle\prod_{t:I\: | \:\psi}X\middle|_f^{\psi}\right\rangle
	\]
	where the proof is established by tracing the composition of the maps given from left to right by
	\[h\mapsto (\lambda t.\pi_1(h(t)),\lambda t.\pi_2(h(t))),\]
	and from right to left by
	\[(f,g)\mapsto \lambda t.(f(t),g(t)).\]
	This applied to (\ref{equivtot}) yields in the first coordinate the projection
	\[
	\pi_1:\sum\limits_{x:A}C(x)\rightarrow A.
	\]
	Since $C:A\rightarrow \mathcal{U}$ is a covariant family over the Segal type $A$  then $\tilde{C}$ is a Segal type \cite[Theorem 8.8]{riehlshulman}. By \Cref{proposition61} the projection respects identities and composition. In particular, if $\bar{f}:(a,u)\cong (b,v)$ then $\pi_1(\bar{f}):a\cong b$.
	Furthermore, covariance of $C$ implies that $(v,\pi_2(\bar{f}))=(\pi_1(\bar{f})_*u,\trans_{\pi_1(\bar{f}),u})$ hence $\pi_1(\bar{f})_*u=v$.
	Whence we can define
	\[
	\phi:((a,u)\cong (b,v))\rightarrow \sum\limits_{f:a\cong b}f_*u=v
	\]
	to be $\phi:\equiv \lambda \bar{f}.(\pi_1(\bar{f}),p)$ where $p:\pi_1(\bar{f})_*u=v$. \\
	In order to construct the function in the other direction we observe the following: if $f:a\cong b$ then there exist $g:\homtyp_A(b,a)$ with 
	$f\circ g=\textup{\textsf{id}}_b$ and $g\circ f=\textup{\textsf{id}}_a$. The covariance of $C$ implies that the types
	\[
	\sum\limits_{w: C(b)}\homtyp_{C(f)}(u,w) \text{ and } \sum\limits_{w: C(a)}\homtyp_{C(g)}(v,w)
	\]
	are contractible with center of contraction $(f_*u,\textup{\textsf{trans}}_{f,u})$ and $(g_*v,\textup{\textsf{trans}}_{g,v})$, respectively. Note also that 
	\[
	u=(\textup{\textsf{id}}_a)_*u=(g\circ f)_*u=g_*(f_*u)=g_*v.
	\]
	We get elements
	\[
	(f,\textup{\textsf{trans}}_{f,u}):\sum\limits_{h:\homtyp_A(a,b)}\homtyp_{C(h)}(u,v)
	\]
	and
	\[
	(g,\textup{\textsf{trans}}_{g,v}):\sum\limits_{h:\homtyp_B(b,a)}\homtyp_{C(h)}(v,u).
	\]
	Using the equivalence
	\begin{align}\label{homtotal}
		\left(\sum\limits_{k: \homtyp_A(x,y)}\homtyp_{C(k)}(w,z)\right)\simeq \homtyp_{\tilde{C}}((x,w),(y,z))
	\end{align}
	we get $\bar{f}:\homtyp_{\tilde{C}}((a,u),(b,w))$ and $\bar{g}:\homtyp_{\tilde{C}}((b,v),(a,u))$. Since $\tilde{C}$ is a Segal type then $\bar{g}\circ\bar{f}: \homtyp_{\tilde{C}}((a,u),(a,u))$. We have the composition in $A$ and the dependent composition, so we obtain the 2-simplex
	\begin{center}
		$\textup{\textsf{comp}}_{g,f}:\homtyp_A^2\left(\begin{tikzcd}[sep=tiny]
			& b \ar[dr,dash,"g"] & \\
			a \ar[ur,dash,"f"] \ar[rr,dash,"\textup{\textsf{id}}_a"'] & & a \\
			 & \text{} &
		\end{tikzcd}\right)$
	\end{center}
	and
	\begin{center}
		$\textup{\textsf{comp}}_{\textup{\textsf{trans}}_{g,v},\textup{\textsf{trans}}_{f,u}}:\homtyp_{C(t)}^2\left(\begin{tikzcd}[sep=tiny]
			& v \ar[dr,dash] & \\
			u \ar[ur,dash] \ar[rr,dash,"\textup{\textsf{id}}_u"'] & & u \\
			& \text{} & 
		\end{tikzcd}\right).$
	\end{center}
	We get a term $\big(\overline{g\circ f},(\textup{\textsf{comp}}_{g,f},\textup{\textsf{comp}}_{\textup{\textsf{trans}}_{g,v},\textup{\textsf{trans}}_{f,u}})\big)$ in
	\begin{center}
		$\sum\limits_{h:\homtyp_{\tilde{C}}((a,u),(a,u))}\homtyp_{\tilde{C}}^2\left( \begin{tikzcd}[sep=tiny]
			& (b,v) \ar[dr,dash,"\bar{g}"] & \\
			(a,u) \ar[ur,dash,"\bar{f}"] \ar[rr,dash,"h"'] & & (a,u) \\
			& \text{} & 
		\end{tikzcd}\right)$.
	\end{center}
	Since this type is contractible, in particular we have $\bar{g}\circ\bar{f}=\overline{g\circ f}$. Under the equivalence (\ref{homtotal}), $\textup{\textsf{id}}_a$ 
	is mapped to $\textup{\textsf{id}}_{(a,u)}$, so \[\bar{g}\circ\bar{f}=\overline{g\circ f}=\overline{\textup{\textsf{id}}}_a=\textup{\textsf{id}}_{(a,u)}.\] Similarly, $\bar{f}\circ\bar{g}=\textup{\textsf{id}}_{(b,v)}$.
	Therefore we define
	\[
	\psi:\sum\limits_{f:a\cong b}f_*u=v\rightarrow ((a,u)\cong (b,v))
	\]	
	as $\psi:\equiv \lambda(f,p).\bar{f}$. The functions $\phi$ and $\psi$ give us the required equivalence.
	
\end{proof}

A related result is \cite[Theorem 2.7.2]{hottbook}, instead of identity types we now have isomorphism types. Thus, an isomorphisms in $\Sigma$-types is determined by an isomorphism in the base together with a dependent isomorphism in the total type, lying over it. We formulate the result, but give no proof, corresponding to contravariant families.

\begin{lemma}\label{isoequivdepsum:contravariant}
	Assume that $C:A\rightarrow \mathcal{U}$ is a contravariant family over a Segal type $A$, let $(a,u),(b,v):\sum\limits_{x:A}C(x)$. Then
	\[
	((a,u)\cong (b,v))\simeq \sum\limits_{f:a\cong b}u=f^*v.
	\]
\end{lemma}

\begin{theorem}\label{totalisrezk}
	If $A$ is a Rezk type and $C:A\rightarrow\mathcal{U}$ is a covariant (contravariant) family over $A$ then $\sum\limits_{x:A}C(x)$ is also
	a Rezk type.
\end{theorem}
\begin{proof}
	Let $(a,u),(b,v):\sum\limits_{x:A}C(x)$. We have
	\begin{align*}
		\big((a,u)=(b,v)\big)\simeq & \sum\limits_{f:a=b}\textup{\textsf{transport}}^C(f,u)=v \\
		\simeq & \sum\limits_{\textup{\textsf{idtoiso}}(f):a\cong b}\textup{\textsf{idtoiso}}(f)_*u=v \\
		\simeq & (a,u)\cong(b,v),
	\end{align*}
	the middle equivalence follows from $A$ being Rezk type and from \Cref{lemma107} where we have the equality $\textup{\textsf{idtoiso}}(e)_* u = \textup{\textsf{transport}}^C (e, u)$.
	The function
	\[
	\textup{\textsf{idtoiso}}:(a,u)=(b,v)\rightarrow (a,u)\cong(b,v)
	\]
	is exactly the equivalence above. Indeed, we can use path induction to see this. If $(a,u)=(a,u)$ then $\textup{\textsf{refl}}_{(a,u)}$ is mapped to $(\textup{\textsf{refl}}_a,\textup{\textsf{refl}}_u)$. This goes to $(\textup{\textsf{id}}_a,\textup{\textsf{refl}}_u)$ 
	which finally corresponds to $\textup{\textsf{id}}_{(a,u)}:(a,u)\cong (a,u)$. Therefore, $\sum\limits_{x:A}C(x)$ is Rezk.
      \end{proof}

      \begin{remark}
        \Cref{totalisrezk} can also be deduced from Proposition 4.2.6 and Corollary 6.1.4 of \cite{buchwein}. Their proof makes use of their theory of \textit{(co)cartesian fibrations} in sHoTT that they develop throughout their work, while ours is more elementary and simply uses the characterization of the type of isomorphisms between elements in a $\sigma$-type we provided in \Cref{isoequivdepsum}.
      \end{remark}

\begin{definition}
	Let $B$ is a Segal type, $f:A\rightarrow B$ a function. We define the type:
	
	\[
	\textup{\textsf{hasColimit}}(f):\equiv \sum\limits_{w:\textup{\textsf{cocone}}(f)}\textup{\textsf{isinitial}}(w)
	\]
	
	and also
	
	\[
	\textup{\textsf{hasLimit}}(f):\equiv \sum\limits_{w:\cone(f)}\textup{\textsf{isterminal}}(w).
	\]
	
\end{definition}

In \Cref{uniqueuptoiso} we showed that limits and colimits are unique up to isomorphism. Now, we can show further:

\begin{corollary} \label{uniquelimits}
	If $B$ is a Rezk type and we have a function $f:A\rightarrow B$. The types $\textup{\textsf{hasColimit}}(f)$ and $\textup{\textsf{hasLimit}}(f)$ are propositions.
\end{corollary}
\begin{proof}
	Let $(a,\alpha), (b,\beta)$ be limits for $f$. By \Cref{uniqueuptoiso} we have that $(a,\alpha)\cong (b,\beta)$. \Cref{totalisrezk} applied to the covariant family
	
	\[
	\homtyp_{B^A}(f,\triangle -):B\rightarrow \mathcal{U}
	\]
	implies that $\cone(f)$ is also a Rezk type. Therefore:
	\[
	\big((a,\alpha)= (b,\beta)\big)\simeq \big((a,\alpha)\cong (b,\beta)\big)
	\]
	hence $\textup{\textsf{hasLimit}}(f)$ is a proposition. A similar proof shows that $\textup{\textsf{hasColimit}}(f)$ is also a proposition.
\end{proof}

\section{Limit of spaces as dependent product}\label{sectioncomputation}

Let $\{G_i\}_{i\in I}$ be a family of $\infty$-groupoids indexed by a set $I$. Denote by $G$ to the obvious diagram $I \to \infty\text{-}\textbf{Gpd}$ then we have that 
\[
\lim_I\, G_i=\prod_{i\in I}G_i.
\]
We can think for simplicity that the calculation above takes place in bisimplicial sets \ie this is a family of complete Segal spaces which are homotopically constant. Or could also be happening in simplicial sets.

Given the previous result, it seems natural to expect that a similar result is true in sHoTT. However, such computation is not straightforward: The obstruction is that in our type theory we do not have an $\infty$-category of $\infty$-groupoids in which we can take this limit. The way we go around this issue is by using the notion of \textit{univalent covariant families} originally due to Cavallo-Riehl-Sattler \cite{riehlcavallosattler} to sufficiently axiomatize the properties of the $\infty$-category of spaces.

Let $B$ a Segal type. For any covariant family $E:B\rightarrow \mathcal{U}$ and $a,b:B$, using the notation of \Cref{covardefi} we obtain a function
\[
\textup{\textsf{arrtofun}}:\homtyp_B(a,b)\rightarrow \big(E(a)\rightarrow E(b)\big)
\]
where for any $f: \homtyp_B(a,b)$ we set
\[
\textup{\textsf{arrtofun}}(f):\equiv f_*\equiv \lambda u.f_*u.
\]

\begin{definition}\label{directedunivalence}
	
	Given $E:B\rightarrow\mathcal{U}$ a covariant family over a Segal type $B$. We say that $E$ is \textbf{univalent} if for all $a,b:B$ 
	the map $\textup{\textsf{arrtofun}}$ is an equivalence. Denote its inverse by
	
	\[
	\dua:\big(E(a)\rightarrow E(b)\big)\rightarrow \homtyp_B(a,b).
	\]
\end{definition}

We can understand the type $B$ as satisfying a ``directed univalent axiom.'' The prime example of a \textit{univalent fibration} in the semantics should be the $\infty$-category of $\infty$-groupoids. In \cite[Remark A.27]{riehlshulman} it is shown that covariant families correspond to left fibrations. Cavallo, Riehl, and Sattler announced in \cite{riehlcavallosattler} (unpublished) the existence of a fibrant universe (bisimplicial set) of left fibrations. This implies the consistency of \Cref{directedunivalence}, see also \Cref{directedunivalence:remark}.

The result in \cite[Proposition 8.18]{riehlshulman} shows that if $ E:B \to \universetyp $ is a covariant type family over a Segal type $B$, then for any $b:B$ the type $E(b)$ is discrete. Furthermore, if $E$ is univalent we could think of $B$ as a ``Segal type that has terms discrete types.'' We will make this idea precise in \Cref{issmallisprop}. In this part of the paper, we make use freely of the extensionality axiom, and the notation introduced in \Cref{extensionality}. In particular, we use $f\sim g$ introduced there.

\begin{lemma}\label{inducesiso}
	Let $B$ a Segal type and $E:B\rightarrow \mathcal{U}$ a univalent covariant family and $a,b:B$. If $\delta:E(a)\rightarrow E(b)$ is an equivalence then $\dua(\delta)$ is an isomorphism.
\end{lemma}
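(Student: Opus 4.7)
The plan is as follows. Since $\delta : E(a) \to E(b)$ is an equivalence, pick a homotopy inverse $\delta^{-1} : E(b) \to E(a)$ with $\delta^{-1}\circ \delta = \textup{\textsf{id}}_{E(a)}$ and $\delta\circ \delta^{-1} = \textup{\textsf{id}}_{E(b)}$ (as functions, using function extensionality to promote the pointwise homotopies). The obvious candidate for an inverse of $\dua(\delta)$ in $B$ is then $g := \dua(\delta^{-1}) : \homtyp_B(b,a)$, and the goal is to verify the two equalities demanded by Proposition \ref{proposition101} so that $\dua(\delta)$ qualifies as an isomorphism.

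The key preliminary observation is that $\arrtofun$ is functorial. Indeed, Proposition \ref{proposition816} provides the pointwise identities $h_*(f_* u) = (h\circ f)_* u$ and $(\idtyp_x)_* u = u$; applying function extensionality in the variable $u$ upgrades these to the function-level equalities
\[
 \arrtofun(h\circ f) \;=\; \arrtofun(h)\circ \arrtofun(f), \qquad \arrtofun(\idtyp_x) \;=\; \textup{\textsf{id}}_{E(x)}.
\]
This is the only place where a small amount of calculation is needed, and it is routine.

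The main step is then a short computation. Applying $\arrtofun_{a,a}$ to $g\circ \dua(\delta):\homtyp_B(a,a)$ and using functoriality together with the fact that $\dua$ is a section of $\arrtofun$ gives
\[
 \arrtofun\bigl(g\circ \dua(\delta)\bigr) \;=\; \arrtofun(g)\circ \arrtofun(\dua(\delta)) \;=\; \delta^{-1}\circ \delta \;=\; \textup{\textsf{id}}_{E(a)} \;=\; \arrtofun(\idtyp_a).
\]
Because $E$ is univalent, $\arrtofun_{a,a}$ is an equivalence, and in particular injective up to equality on its domain; hence $g\circ \dua(\delta) = \idtyp_a$. The identical argument at $(b,b)$, using $\delta\circ \delta^{-1} = \textup{\textsf{id}}_{E(b)}$, yields $\dua(\delta)\circ g = \idtyp_b$. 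Proposition \ref{proposition101} then packages $g$ with these two equalities into an element of $\textsf{\textup{isiso}}(\dua(\delta))$, so $\dua(\delta)$ is an isomorphism.

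The main obstacle, modest though it is, is the passage from the pointwise composition and identity laws of Proposition \ref{proposition816} to their function-level counterparts for $\arrtofun$. Once that functoriality is recorded, everything else is simply reading off consequences of univalence of $E$ together with Proposition \ref{proposition101}.
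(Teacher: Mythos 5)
Your proposal is correct and follows essentially the same route as the paper: both arguments use Proposition \ref{proposition816} together with function extensionality to establish that $\arrtofun$ preserves composition and identities, then transfer the two-sided inverse $\delta^{-1}$ across the equivalence $\arrtofun$/$\dua$ to produce a two-sided inverse for $\dua(\delta)$, concluding via Proposition \ref{proposition101}. The only cosmetic difference is that you cancel $\arrtofun$ by injectivity of an equivalence where the paper applies $\dua$ to both sides; these are the same step.
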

\begin{proof}
	By \Cref{proposition816} for any $x:B$ and $u:E(x)$ we have $(\idx_{x})_*(u)=u$. Therefore, $\arrtofun(\idx_x)=\idx_{E(x)}$. This also implies that	
	\[
	\dua(\idx_{E(x)})=\dua(\arrtofun(\idx_x))=\idx_x.
	\]
	Treat $\delta:E(a)\rightarrow E(b)$ as a bi-invertible function so there is $\gamma:E(b)\rightarrow E(a)$ such that $\delta\gamma\sim \idx_{E(b)}$ and $\gamma\delta\sim \idx_{E(a)}$.
	From the extensionality axiom we can further assume $\delta\gamma=\idx_{E(b)}$ and $\gamma\delta=\idx_{E(a)}$. Hence, $\dua(\delta\gamma)=\idx_b$ and $\dua(\gamma\delta)=\idx_a$. 
	Observe that again from \Cref{proposition816}, for any $f : \homtyp_B(x, y)$, $g : \homtyp_B(y, z)$, and $u : E(x)$, 
	we have $g_* (f_* u) = (g\circ f )_* u$, and as a consequence
	\[
	\arrtofun(\dua(\delta)\circ\dua(\gamma))=\arrtofun(\dua(\delta))\arrtofun(\dua(\gamma)))=\delta\gamma=\idx_{E(b)}
	\]
	it follows that $\dua(\delta)\circ\dua(\gamma)=\dua(\delta\gamma)=\idx_b$. Similarly	
	$$
	\dua(\gamma)\circ\dua(\delta)=\dua(\gamma\delta)=\idx_a.$$
        Which concludes the proof of the statement.
\end{proof}

\begin{definition} \label{issmall:def}
	Given a type $A$ and a covariant family $E:B\rightarrow \mathcal{U}$ over a type $B$, we define the type
	\[
	\issmall^E_B(A):\equiv \sum\limits_{b:B}\big(E(b)\simeq A\big).
	\]
	We say that $A$ is $B$-small if $\issmall^E_B(A)$ is inhabited.
\end{definition}

A related notion appears in \cite[Definition 17.1.3]{hottRijke} where they define what it means for a type $A$ to be small with respect to a univalent universe $\universetyp$; A type $A$ is said to be $\universetyp$-\textbf{small} if there is type $X:\universetyp$ and an equivalence between $A$ and $X$. The same definition appear in \cite[Section 2.19]{Symmetry} under the name \textit{essentially $\universetyp$-small}. From our definition, since we require a covariant family, we can express whether a discrete type $A:\universetyp$ is $\universetyp$-small. This is because if $A:\universetyp$ and $E:\pointtyp \to \universetyp$ is a covariant family with value $A$, then $A$ must be discrete. And it is obvious that $A$ is $\universetyp$-small. A more general and concrete example of this can be seen in the semantics. Given a Grothendieck universe $U$, one could build a sub-universe $U'$ (an $\infty$-category) which consists of $\kappa$-small $\infty$-groupoids, where $\kappa$ is a regular cardinal. By construction, elements of $U'$ are exactly the $U'$-small elements of $U$. Of course, this would be one way to justify the consistency of \Cref{directedunivalence}.

The idea in \Cref{issmall:def} is that $B$ has an element representing the type $A$ in question. We can think of $A$ as ``a type in $B$'', hence the name $B$-small. To make sense of this, we need to note some simple facts. The first lemma simply computes $\transport^D$ for the type family $D:B \to \universetyp$ as defined below.

\begin{lemma} \label{transport-along-equivalence:lem}
  Let $E:B\rightarrow \mathcal{U}$ be a univalent covariant family over a Rezk type $B$. For a type $A$, define $D:B\rightarrow\mathcal{U}$ as $D(b):\equiv E(b)\simeq A$.
Then for any $b,b':B$, $p:b=b'$ and $\sigma:E(b)\simeq A$
\[
\transport^D(p,\sigma)=\sigma(\arrtofun(\idtoiso(p^{-1}))).
\]
\end{lemma}
\begin{proof}
  
By path induction we can assume that $p\equiv \refl_b:b=b$. In this case 
\[
\transport^D(\refl_b,\sigma)\equiv \idx_{D(b)}(\sigma)=\sigma.
\]
Whereas on the right-hand side we have
\begin{align*}
	\sigma(\arrtofun(\idtoiso(p^{-1}))) &\equiv \sigma(\arrtofun(\idtoiso(\refl_b))) \\
	&\equiv \sigma(\arrtofun(\idx_b))\\ 
	&\equiv\sigma\idx_{E(b)} \\
	&=\sigma
\end{align*}
hence we can simply use $\refl_{\sigma}:\sigma=\sigma$.
Additionally, path induction also shows that for any $p:b=b'$
\[
\arrtofun(\idtoiso(p^{-1}))=\arrtofun(\idtoiso(p))^{-1}
\]
since if $p\equiv \refl_b$ then both sides of the equality are $\idx_{E(b)}$. Here, $\arrtofun(\idtoiso(p))^{-1}$ abusively denotes an inverse for $\arrtofun(\idtoiso(p))$.
\end{proof}
This has the following consequence:

\begin{corollary}\label{issmallisprop}
	Let $E:B\rightarrow \mathcal{U}$ be a univalent covariant family over a Rezk type $B$. Then for any type $A$, $\issmall^E_B(A)$ is a proposition.
\end{corollary}
\begin{proof}
	Let $(b,\sigma),(b',\sigma'):\sum\limits_{b:B}\big(E(b)\simeq A\big)$. We have
	\[
	\big((b,\sigma)=(b',\sigma')\big)\simeq \sum\limits_{p:b=b'}\sigma=^p\sigma'
	\]
	therefore it is enough to give a term of the right-hand side. We realize $\sigma,\sigma'$ as a bi-invertible functions, let $\delta:A\rightarrow E(b)$, $\delta':A\rightarrow E(b')$ be their inverses, respectively. We get an equivalence $\delta'\sigma:E(b)\simeq E(b')$, by \Cref{inducesiso} $\dua(\delta'\sigma):b\cong b'$. Since $B$ is a Rezk type, $\rezk(\dua(\delta'\sigma)):b=b'$. From the \Cref{transport-along-equivalence:lem} above and the fact that $((\arrtofun)(\idtoiso))((\rezk)(\dua))=\idx_{E(b')^{E(b)}}$  we obtain:
	\begin{align*}
		\transport^D(\rezk(\dua(\delta'\sigma)),\sigma) &=\sigma(\arrtofun(\idtoiso(\rezk(\dua(\delta'\sigma))^{-1})) \\
		&=\sigma(\arrtofun(\idtoiso(\rezk(\dua(\delta'\sigma))))^{-1}) \\
		&=\sigma(\delta'\sigma)^{-1} \\
		&=\sigma(\delta\sigma')\\
		&=\idx_{A}\sigma' \\
		&=\sigma'
	\end{align*}
	where again $(\delta'\sigma)^{-1}$ denotes an inverse for $\delta'\sigma$, but since $\delta\sigma'$ is also an inverse these are equal.	
\end{proof}

\begin{remark}
	In the situation of \Cref{issmallisprop}, if a type $A$  is $B$-small then $\issmall^E_B(A)$ is contractible. This entails the existence of a center of contraction $(b_0,\sigma_0)$ where $b_0$ is a term of type $B$ and $\sigma_0$ is an equivalence between $E(b_0)$ and $A$. Hence, $A$ is uniquely related to a term in $B$.
\end{remark}

Therefore, given a directed univalent family of types $E:B\rightarrow \mathcal{U}$, one can think of $B$ as being the $\infty$-category of ``small spaces'', or at least a full subcategory of it. However, note that we do not assume that $B$ is closed under limits. The next proposition shows that as long as a dependent product of small types is itself small, it corresponds to a limit in $B$.

\begin{proposition} \label{condition1}
	Let $B$ a Rezk type, a function $f:D\rightarrow B$ and assume that $E:B\rightarrow \mathcal{U}$ is a univalent covariant family such that $\prod\limits_{d:D}E(f(d))$ is $B$-small. If $(b_0,\sigma_0)$ is the center of contraction of $\issmall^E_B(\prod\limits_{d:D}E(f(d)))$ then $b_0$ is the limit for the function $f$.
\end{proposition}
\begin{proof}
	Indeed, using \Cref{univpropcolim} it is enough
	to show that for all $b:B$
	
	\[
	\homtyp_B(b,b_0)\simeq \homtyp_{B^D}(\triangle b,f).
	\]
	For the right-hand side, we have:	
	\[
	\homtyp_{B^D}(\triangle b,f)\simeq\left( \prod\limits_{d:D}\homtyp_{B}(b,f(d))\right) \simeq \left(\prod\limits_{d:D}E(b)\rightarrow E(f(d)) \right)
	\]
	The last equivalence follows because $E$ is univalent. For the same reason together with the assumption $\sigma_0:E(b_0)\simeq \prod\limits_{d:D}E(f(d))$
	the left-hand side is equivalent to	
	\[
	\big(E(b)\rightarrow E(b_0)\big) \simeq \left(E(b)\rightarrow\prod\limits_{d:D}E(f(d))\right).
	\]
	And certainly we have the equivalence
	\[
	\left(E(b)\rightarrow\prod\limits_{d:D}E(f(d))\right)\simeq \left(\prod\limits_{d:D}E(b)\rightarrow E(f(d))\right).
	\]
	
\end{proof}

For the rest of the section we want to show that the condition over the dependent product of the fiber being $B$-small in \Cref{condition1} is necessary for the existence of the limit.

\begin{observation} \label{contraction-center:observation}
Suppose that $D:B\rightarrow \universetyp$ is a constant covariant family at $D:\universetyp$ over a Segal type $B$. Take any $f:\homtyp_B(x,y)$ and $w:D$
then the type
\[\sum\limits_{v:D}\homtyp_{D(f)}(w,v)\]
has center of contraction $(f_*w,\trans_{f,w})$. The point $(w,\lambda (t:\mathbbm{2}).w)$ belongs to the aforementioned type, which then must be equal to the center of contraction. In particular this implies that $f_*w=w$.  
\end{observation}

This observation has the following consequence:

\begin{lemma}\label{generilized}
	Let $B:\universetyp$ a Segal type and $E:B\rightarrow \universetyp$ a covariant univalent family over $B$. If there is $u:E(b_0)$ for some $b_0:B$
	then for all $b_1:B$ \[ E(b_1)\simeq \prod\limits_{b:B}\homtyp_B(b,b_1).\]
\end{lemma}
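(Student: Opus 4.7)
The plan is to construct the equivalence directly, leveraging directed univalence together with the distinguished point $u:E(b_0)$. In the forward direction I will define $\Phi:E(b_1)\to\prod_{b:B}\homtyp_B(b,b_1)$ by
\[\Phi(v):=\lambda b.\dua(\lambda x.v),\]
assigning to $v:E(b_1)$ the family whose $b$-component is the arrow in $B$ obtained, via directed univalence, from the constant function $\lambda x.v:E(b)\to E(b_1)$. In the other direction I will define $\Psi:\prod_{b:B}\homtyp_B(b,b_1)\to E(b_1)$ by $\Psi(\alpha):=\alpha(b_0)_*u$, using covariant transport of $u$ along the $b_0$-component of $\alpha$.

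The composite $\Psi\circ\Phi$ equals the identity essentially by definition, since $\Psi(\Phi(v))=\arrtofun(\dua(\lambda x.v))(u)=(\lambda x.v)(u)=v$, where the middle equation invokes that $\dua$ is inverse to $\arrtofun$ from univalence.

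The substantive direction is $\Phi\circ\Psi=\idtyp$, which reduces to checking $\alpha(b)=\dua(\lambda x.\alpha(b_0)_*u)$ for every $b:B$, or equivalently (applying $\arrtofun$), that $\alpha(b)_*x=\alpha(b_0)_*u$ for every $x:E(b)$. Here I will use Remark~\ref{proposition65} to lift $\alpha$ to a natural transformation in $\homtyp_{B^B}(\lambda b.b,\triangle b_1)$, so that by Remark~\ref{proposition66} its components satisfy the naturality identity $\alpha(y)\circ k=\alpha(x)$ for every $k:\homtyp_B(x,y)$ (noting that $\triangle b_1$ sends every arrow to $\idtyp_{b_1}$, which is a left identity for composition). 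The crucial trick is: given any $x:E(b)$, the arrow $f:=\dua(\lambda y.x):\homtyp_B(b_0,b)$ satisfies $f_*u=\arrtofun(f)(u)=(\lambda y.x)(u)=x$. Combining this with Proposition~\ref{proposition816} and naturality,
\[\alpha(b)_*x=\alpha(b)_*(f_*u)=(\alpha(b)\circ f)_*u=\alpha(b_0)_*u,\]
as desired.

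The main obstacle is precisely this last step: one must realize every $x:E(b)$ as $f_*u$ for some arrow $f:b_0\to b$. This is exactly where the joint use of univalence and the hypothesized section $u$ becomes unavoidable—univalence permits the manufacture of the arrow $f$ from the constant function $\lambda y.x$ via $\dua$, and naturality of $\alpha$ then forces the component $\alpha(b)$ to be pointwise constant at $\alpha(b_0)_*u$. Without either ingredient this collapsing argument would break down, explaining why both appear in the hypotheses.
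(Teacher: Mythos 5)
Your proof is correct, and it rests on the same central trick as the paper's: use $\dua$ applied to constant functions to manufacture arrows in $B$ with prescribed covariant transport, so that the hypothesized point $u:E(b_0)$ forces every component of a cone to collapse to $\alpha(b_0)_*u$. The packaging, however, is genuinely different. The paper first transports the entire statement across univalence and proves $E(b_1)\simeq\prod_{b:B}(E(b)\rightarrow E(b_1))$, with $F(v)=\lambda b.\lambda w.v$ and $G(\phi)=\phi_{b_0}(u)$; its collapsing step $\phi_b(w)=\phi_{b_0}(u)$ comes from the naturality of fiberwise maps between covariant families (Proposition \ref{proposition817}), applied to the arrow $\dua(\lambda w.u):\homtyp_B(b,b_0)$ and the constant target family. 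You instead stay on the hom side, realize each $x:E(b)$ as $f_*u$ for $f:=\dua(\lambda y.x):\homtyp_B(b_0,b)$ (note your manufactured arrow points in the opposite direction to the paper's), and combine the naturality of $\alpha$ viewed as a natural transformation $\idtyp_B\Rightarrow\triangle b_1$ (Remarks \ref{proposition65} and \ref{proposition66}, using that $\triangle b_1$ sends every arrow to $\idtyp_{b_1}$) with functoriality of covariant transport (Proposition \ref{proposition816}). Both routes are sound. The paper's version keeps everything in terms of fiberwise maps and avoids lifting $\alpha$ along the equivalence of Remark \ref{proposition65}; yours avoids the constant-family bookkeeping (which is slightly garbled in the paper's own write-up) and makes the roles of $u$ and of directed univalence more transparent, at the small cost of an extra appeal to function extensionality and to the fact that $\arrtofun$ reflects equality.
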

\begin{proof}
	The covariant family $E:B\rightarrow\universetyp$ is univalent, so instead we may prove that for all $b_1:B$	
	\[ E(b_1)\simeq \left(\prod\limits_{b:B}E(b)\rightarrow E(b_1)\right).\]
	Let $b_1:B$ and define
	\[ F:E(b_1)\rightarrow \left(\prod\limits_{b:B}E(b)\rightarrow E(b_1)\right),\]
	where $F(v):\equiv F_v$ is the dependent function $\lambda(b:B).\lambda(w:E(b)).v$. The function in the other direction
	\[ G:\left(\prod\limits_{b:B}E(b)\rightarrow E(b_1)\right)\rightarrow E(b_1) \]
	for each $\phi:\left(\prod\limits_{b:B}E(b)\rightarrow E(b_1)\right)$ is defined by $G(\phi):\equiv \phi_{b_0}(u)$. One composition immediately is the identity
	\[ G(F(v))\equiv G(F_v)\equiv (F_v)_{b_0}(u)\equiv v.\]
	For any $\phi:\left(\prod\limits_{b:B}E(b)\rightarrow E(b_1)\right)$ we have $F(G(\phi))\equiv F(\phi_{b_0}(u))\equiv F_{\phi_{b_0}(u)}$ and we need to show that this dependent function is equal to $\phi$. Consider the family
	\[D:B\rightarrow\universetyp\]
	$D(b):\equiv E(b_1)$, which is covariant. For any $b:B$ define $f:E(b)\rightarrow E(b_0)$ as $f(w):\equiv u$. Since the family $E$ is univalent, 
	$\dua(f):\homtyp_B(b,b_0)$ and
        \begin{align} \label{equation:arrtofun}
           \dua(f)_*\equiv \arrtofun(\dua(f))=f
        \end{align}
        where the definitional equality follows by definition of $\arrtofun$.
        
	By naturality, as in \Cref{proposition817}, applied to the function
	$\phi:\prod\limits_{b:B}E(b)\rightarrow D(b)$ and the arrow $\dua(f):\homtyp_B(b,b_0)$, we have the following commutative square
\[\begin{tikzcd}
	{E(b)} && {E(b_0)} \\
	{D(b)} && {D(b_0)}
	\arrow["{\dua(f)_*}", from=1-1, to=1-3]
	\arrow["{\phi_b}"', from=1-1, to=2-1]
	\arrow["{\phi_{b_o}}", from=1-3, to=2-3]
	\arrow["{\dua(f)_*}"', from=2-1, to=2-3]
\end{tikzcd}\]
where the horizontal functions are given by the specified type families. We now evaluate at any $w:E(b)$. Since $D$ is constant over $B$, from \Cref{contraction-center:observation} we get the first equality below, the second follows by naturality and the third equality comes from (\ref{equation:arrtofun}),
	\begin{align*}
		\phi_b(w) &= \dua(f)_*(\phi_b(w)) \\
		&= \phi_{b_0}(\dua(f)_*(w)) \\
		&= \phi_{b_0}(f(w)) \\
		&= \phi_{b_0}(u) \\
		&\equiv (F_{\phi_{b_0}(u)})_b(w),
	\end{align*}
        while the second to last and last equality follow by definition of $f$ and $F$, respectively. This establishes the equality $\phi=F_{\phi_{b_0}(u)}$. Therefore, $FG$ is the identity on $\prod\limits_{b:B}(E(b)\rightarrow E(b_1))$, hence the equivalence.
\end{proof}

Finally, we have:

\begin{proposition} \label{limit-characterization}
	Let $B$ a Rezk type and a function $f:D\rightarrow B$ with limit $(b_1,\alpha)$. Assume that $E:B\rightarrow \mathcal{U}$ is a univalent covariant family such that for some $b_0:B$ there is $u:E(b_0)$. Then $E(b_1)\simeq \prod\limits_{d:D}E(f(d))$.
\end{proposition}

This just means that as long as the family $E:B\rightarrow \mathcal{U}$ has at least one fiber that is inhabited, then the sufficient condition of \Cref{condition1} is also necessary for the existence of a limit.

\begin{proof}
	From \Cref{generilized} we have
	\[E(b_1)\simeq \prod\limits_{b:B}\homtyp_B(b,b_1),\]	
	since $b_1$ is the limit:
	\[\homtyp_B(b,b_1)\simeq \prod\limits_{d:D}\homtyp_B(b,f(d)).\]
	Combining this equivalences and using \Cref{generilized} again give us:
	\begin{align*}
		E(b_1) & \simeq \prod\limits_{b:B}\prod\limits_{d:D}\homtyp_B(b,f(d)) \\
		& \simeq \prod\limits_{d:D}\prod\limits_{b:B}\homtyp_B(b,f(d)) \\
		& \simeq \prod\limits_{d:D} E(f(d)). \\
	\end{align*}
	
      \end{proof}

      \begin{remark} \label{directedunivalence:remark}
  In a recent paper \cite{gratzer2024directed}, the authors construct the $(\infty,1)$-category of $\infty$-groupoids. They achieve this by enhancing sHoTT with modalities. The type $\mathcal{S}$ they construct satisfies exactly \Cref{directedunivalence}. Our results in this section simply say that a diagram with values in $\mathcal{S}$ has a limit in $\mathcal{S}$. This limit is given by the dependent product. We could rewrite our results by replacing our type $B$ by $\mathcal S$, and each type $E(b)$ for $b:B$ by an element of $\mathcal S$. However, we keep our presentation as it is since we think it showcases a ``resizing'' technique by means of \Cref{issmall:def} that could be useful in other contexts.
\end{remark}


\section{The bisimplicial sets semantics of sHoTT} \label{semantics}

In this this section, we verify that our synthetic definitions of limit an colimit are semantically correct. We use the fact the semantics of sHoTT is the category of bisimplicial sets $ \bisim. $ 

In \cite[Appendix A]{riehlshulman} it is shown that sHoTT can be interpreted in the category $ \bisim $. Let us first recall the following result from \cite{shulmanreedy}:	

\begin{theorem} \label{modelshott}
	For any elegant Reedy category $\catC$, the Reedy model structure on $\sset^{\catC^{op}}$ supports a model of intentional type theory with dependent sums and products, identity types and as many univalent universes as there are inaccessible cardinals greater than $|\catC|$, where $|\catC|$ is the cardinal of the set of objects of the category $\catC$.
\end{theorem}

Let us first review how to interpret some usual types. The one point type is the easiest, it agrees with the terminal object in our model. According to \cite{shulmanreedy} we can take a category $\catM$ which is locally cartesian closed and has a model structure that is simplicial, right proper and the cofibrations are the monomorphisms, this is the situation of \Cref{modelshott}. If we have a map $f:A\to B$ then the pullback functor $f^*:\catM/B \to \catM/A$ has both right and left adjoint which are denoted $\prod_f$ and $\sum_f$. Since fibrations are closed under compositions, the functor $\sum_f$ preserves fibrations if $f$ is itself a fibration. The functor $f^*$ preserve cofibrations since these are exactly the monomorphisms, thus $\prod_f$ preserves acyclic fibrations. Furthermore, one can also see that in combination with right properness $\prod_f$ preserves fibrations, see \cite[Theorem 26]{arndt2011homotopy}. This is how dependent sums and products are interpreted.

The Reedy model structure on bisimplicial sets gives rise to a \textbf{comprehension category with shapes} in which the judgment $\Gamma\vdash A\,\type$ is directly interpreted as a fibration $\Gamma \to A$. Here we are overlooking the fact that our types also may depend on topes and shapes. To see more details and the correct definitions we refer the reader to our cited main reference \cite{riehlshulman}. The technical result is:

\begin{theorem}(\cite[Theorem A.18]{riehlshulman})\label{sttmodel} 
	The comprehension category constructed from any model category with $\mathfrak{T}$-\textbf{shapes} has \textbf{pseudo-stable coherent tope logic} with type eliminations for tope disjunction and also \textbf{pseudo-stable extension types} satisfying relative function extensionality.
\end{theorem}

\cite{riehlshulman} use the result on $\mathfrak T$ the coherent theory of the strict interval of sHoTT presented in \Cref{extensiontypes} or \cite[Section 3.1]{riehlshulman}.
Furthermore, Weinberger shows in \cite{weinberger2022strict} that the semantical interpretation extension types is strictly stable under pullbacks, and not only pseudo stable as in the theorem above. Moreover, \cite[A.3]{riehlshulman} proves:
\begin{theorem} \label{eqsegaltypesegalspace}
	 Segal types correspond to Segal spaces in $ \bisim $ and Rezk types to Rezk spaces (a.k.a. complete Segal spaces).
\end{theorem}

\begin{remark} \label{discreteconstant}
   In \Cref{discretetypes} we defined discrete types. However, the terminology might be confusing. According to \cite[Proposition 10.10]{riehlshulman} a type $ A $ is discrete if and only if $ A $ is Rezk and all its arrows are isomorphisms. On the other hand, \cite[Corollary 6.6]{rezk} gives a similar characterization for complete Segal spaces, such simplicial spaces are \textit{constant}. A simplicial space $ X $ is \textbf{constant} if for any map $ [n] \to [m] $, the induced map $ X_m \to X_n $  is an equivalence of spaces. In particular, for all $ n $ we have $ X_n=X_0 $. Hence, discrete types correspond to constant simplicial spaces. As a consequence, we interpret discrete types into the bisimplicial sets model as constant simplicial spaces.
\end{remark}

\begin{remark} \label{inftytoposinterpretation}
  The interpretation of sHoTT can be done in greater generality. This is essentially a consequence of the main result in \cite{shulman2019all}, where it is shown that any Grothendieck $(\infty,1)$-topos gives a model for Homotopy Type Theory. If $\catC$ is a \textit{type-theoretic model topos} then one can consider simplicial objects internal to $\catE$. The resulting category of internal presheaves gives us again a model for sHoTT. For more details we refer to \cite[Section 2]{weinberger2022strict}.
\end{remark}

Now we have the semantical overview of sHoTT. This gives us the opportunity to say a few words on the necessity of contrasting the definitions we make in the type theory with the existing ones in the bisimplicial sets model. 

\begin{remark} \label{typeimpliessemantics}
  As we mentioned above, sHoTT can be interpreted the category of bisimplicial sets. However, note that a type theoretic statement translates internally to the bisimplicial sets model. A concrete example of this phenomena is the characterization of Segal types. In general, each type is interpreted as a Reedy fibrant bisimplicial set. On the one hand, the equivalence $ A^{\simplextt} \to A^{\horn} $ from \Cref{rmkcorollary56} is interpreted as a trivial fibration in the Reedy model structure on $ \bisim $. On the other hand, Segal spaces are defined by the Segal condition: A bisimplicial set $A$ is a Segal space if for all $n\geq 2$, the map $A_n \to A_1 \times_{A_0} \cdots \times_{A_0} A_1$ is a trivial fibration in the Fallen-Kan model structure on $ \sset $. \cite[A.3]{riehlshulman} show that both conditions are equivalent. 
\end{remark}

Informally speaking, an internal statement in the model could be stronger under interpretation in the intended semantics. The results we present in the following sections show that this is not the case \ie the internal and external notions coincide.

\subsection{Limits and colimits}\label{semanticscomparisonlc}

We verify the consistency of our definitions of synthetic limits and colimits introduced in \Cref{colimitdef}. The notion of limits and colimits in Segal spaces we are considering are the ones presented in \cite{yonedarasekh}. The goal of this section is prove the next result:

\begin{theorem}
	The definitions of limits and colimits from \Cref{colimitdef}, under interpretation in the category of bisimplicial sets, coincide with the definitions of limits and colimits from \cite{yonedarasekh}.
\end{theorem}

Firstly, let us recall some standard notation and terminology we will use:

\begin{itemize}
\item $\Delta$ will denote the category whose objects are the non-empty linearly ordered finite sets, $[n] :\equiv \{0\leq 1 \leq \cdots \leq n \}$ with $n\in \N$, and morphisms the order preserving maps between linearly ordered sets.
\item The representable presheaf $\Delta^{op} \to \set$  given by $[n] \in \Delta$ is noted by $\Delta[n]$.
\item For each $ n\in \N $, $ F(n): \Delta^{op} \times \Delta^{op} \to \set  $ is defined as $$ F(n)_{k,l} :\equiv \Delta([k],[n]). $$ Two particular examples are $F(1)$ and $F(0)$, the last of which is the terminal object.
\item The category of simplicial sets is cartesian closed: for $X,Y \in \sset$, the \textbf{mapping simplicial set} is defined as \[Map(X,Y)_n :\equiv \sset(X\times \Delta[n],Y).\]
\item The category of bisimplicial sets is cartesian closed: for $X,Y \in \bisim$, the \textbf{mapping simplicial set} is defined as $$(Y^X)_{kl} :\equiv \bisim(F(k)\times \Delta[l] \times X,Y).$$
\item  Decorated arrows $``\twoheadrightarrow''$ indicate fibrations a the model structure. We will use this notation for the different models involved, but there is no room for confusion as the type of fibration is clear from the context.
\end{itemize}

Using \Cref{sttmodel} it is possible to conclude that simplicial type theory can be interpreted in bisimplicial sets. In particular $\homtyp$ types do coincide with $hom$ spaces. From \cite{rezk}, if $T$ is a Segal space then the mapping space between two objects $a,b\in T_0$ is constructed via the pullback square of spaces:

\begin{equation} \label{hompullback}
	\begin{tikzcd}
		map_T(a,b) \ar[r] \ar[d] \ar[dr,phantom,"\lrcorner"{near start}] & T_1 \ar[d, twoheadrightarrow] \\
		\Delta[0] \ar[r,"{(a,b)}"'] & T_0\times T_0.
	\end{tikzcd}
\end{equation}

On the other hand, from \Cref{theorem44} we get
\[\simplext\to A\simeq \sum\limits_{k:\partial\simplext\to A}\langle \simplext\to A|_k^{\partial\simplext}\rangle,\]
and it is immediate that the type of functions $\partial\simplext\to A$ is equivalent to $A\times A$. Therefore, if $A$ is a Segal type then $A^{\simplext}$ is the total space of a family over $A\times A$ whose fibers are $\homtyp$ types. This family is exactly $\homtyp (-,-):A\times A\to\universetyp$. Under the standard interpretation $\simplext$ is $F(1)$ and $\pointtyp$ is $F(0)$. Given $a,b:A$ we obtain $\homtyp_A(a,b)$ as the substitution along $(a,b):\pointtyp\to A\times A$ into $\homtyp_A(-,-)$. When we interpret our type theory into simplicial spaces this gives us the pullback square of bisimplicial sets
\[
\begin{tikzcd}
	hom_A(a,b) \ar[r] \ar[d,twoheadrightarrow] \ar[dr,phantom,"\lrcorner"{very near start}] & A^{F(1)} \ar[d, twoheadrightarrow] \\
	F(0) \ar[r,"{(a,b)}"'] & A\times A.
\end{tikzcd}
\]

We obtain the following comparison result:

\begin{proposition} \label{hom-eq-mapping}
  Let $ A $ be a Segal type and $ a,b:A $. The interpretation of $\homtyp_A(a,b)$ into bisimplicial sets is a constant simplicial space with value the Kan complex $ map_A(a,b) $.
\end{proposition}
\begin{proof}
 Note that $ hom_A(a,b)_0=map_A(a,b) $. Furthermore, from \cite[Proposition 8.29]{riehlshulman} $ \homtyp_A (-,-): A \to A \to \universetyp $ is a two-sided discrete fibration, so it is $ A^{F(1)} \to A \times A $. Hence, since each fiber $ \homtyp_A(a,b) $ is discrete it follows from \Cref{discreteconstant} that $ hom_A(a,b) $ is a constant simplicial space at $ map_A(a,b)$.
\end{proof}

The definition of a colimit from \cite{yonedarasekh} is formulated as follows: let $f:K\to A$ be a map of bisimplicial sets where $A$ is a Segal space. The \textbf{space of cocones under} $K$ is the Segal space
\[ A_{f/}:\equiv F(0)\times_{A^K}(A^{K})^{F(1)}\times_{A^K}A. \]
If $a\in A$ then we can construct the space of objects under $a$ by taking the map $a:F(0)\to A$ and applying the previous definition.

The map $f:K\to A$ has a colimit if the Segal space $A_{f/}$ has an initial object. This is to say that there exist an object $\sigma\in A_{f/}$ such that the map $(A_{f/})_{\sigma/} \to A_{f/}$ is a trivial Reedy fibration. Let $a\in A$ be an initial object. Therefore, for any object $x\in A$ we have the pullback:
\[
\begin{tikzcd}
	hom_A(a,x) \ar[r] \ar[d,twoheadrightarrow] \ar[rd,phantom,"\lrcorner"{near start}]& A_{a/} \ar[d, twoheadrightarrow] \\
	F(0) \ar[r,"x"] & A.
\end{tikzcd}
\]
The assumption that the map on the right is also an equivalence implies that $hom_A(a,x)$ is equivalent to $F(0)$. This is to say that $hom_A(a,x)$ is contractible and it means that up to homotopy there is a unique map from $a$ to $x$. Let us see first why this prospect of initiality matches with \Cref{definitioninitiality}. In \Cref{hom-eq-mapping} we showed that $hom_A(a,b)$ is a bisimplicial set constant at $map_A(a,b)$. It remains to show that equivalences are translated correctly. This follows immediately from \cite[Lemma 4.3]{shulmanreedy}:

\begin{lemma}
	For a map $f$ between fibrations $p_1:E_1\to B$ and $p_2:E_2\to B$
	the following are equivalent:
	
	\begin{enumerate}
		\item $f$ is a weak equivalence.
		\item $\isequiv_B(f)\to B$ has a section.
		\item There is a map $\pointtyp\to \prod_B \isequiv_B(f)$.
		\item $\isequiv_B(f)\to B$ is an acyclic fibration.
	\end{enumerate}
\end{lemma}

Theorem 4.4.3 and Theorem 4.2.6 of \cite{hottbook} imply that a function being an equivalence is logically equivalent to having contractible fibers. The object $\isequiv_B(f)$ encodes this last fact as is shown throughout \cite[Section 4]{shulmanreedy}. Therefore, if a type is contractible then under the standard interpretation it is also contractible in the model. We can conclude that our \Cref{definitioninitiality} of initiality coincides with the one in the semantics. All there is left to do is to compare the cocones with \Cref{conesdefinition}. It is useful to see that the space $A_{f/}$ is constructed with the successive pullbacks

\[
\begin{tikzcd}
	F(0)\times_{A^K}(A^{K})^{F(1)} \ar[r] \ar[d,twoheadrightarrow] \ar[rd,phantom,"\lrcorner"{near start}] & (A^{K})^{F(1)} \ar[d, twoheadrightarrow] \\
	F(0) \ar[r,"f"'] & A^K
\end{tikzcd}
\begin{tikzcd}
	A_{f/} \ar[r] \ar[d,twoheadrightarrow] \ar[rd,phantom,"\lrcorner"{near start}] & F(0)\times_{A^K}(A^{K})^{F(1)} \ar[d, twoheadrightarrow] \\
	A \ar[r,"\triangle"'] & A^K.
\end{tikzcd}
\]
Similar to the case with $\homtyp$ types: $(A^K)^{\simplext}$ is the total space of the type family $\homtyp_{A^K}(-,-)$ over $A^K\times A^K$. Then if we take $f:K\to A$ and any $x:A$ the type $\homtyp_{A^K}(f,\triangle x)$ is the substitution of $\homtyp_{A^K}(-,-)$ along $(f,\triangle):A \to A^K\times A^K$. Thus this is the pullback in the semantics:
\[
\begin{tikzcd}
	A_{f/} \ar[r] \ar[d] \ar[rd,phantom,"\lrcorner",very near start] & (A^X)^{F(1)} \ar[d] \\
	A\times F(0) \ar[r,"\triangle\times f"] & A^X \times A^X.
\end{tikzcd}
\]


	\bibliography{references-limits}
	\bibliographystyle{alpha}
	
\end{document}